\def\blfootnote{\gdef\@thefnmark{}\@footnotetext}
\def\RCAo{\mathsf{RCA_0}}
\def\WWKLo{\mathsf{WWKL_0}}
\def\WKLo{\mathsf{WKL_0}}
\def\ACAo{\mathsf{ACA_0}}
\def\RCA{\mathsf{RCA_0}}
\def\WWKL{\mathsf{WWKL_0}}
\def\WKL{\mathsf{WKL_0}}
\def\ACA{\mathsf{ACA_0}}
\def\PA{\mathrm{PA}}
\def\E{\exists}
\def\A{\forall}
\def\N{\mathbb{N}}
\def\Z{\mathbb{Z}}
\def\Q{\mathbb{Q}}
\def\R{\mathbb{R}}
\def\rest{{\upharpoonright}}
\newcommand{\mr}[1]{\mathrm{#1}}
\def\P2{\Pi^1_2}
\newcommand{\D}{{\mathbb D}}
\newcommand{\rng}{\mathrm{rng}}
\renewcommand{\labelenumi}{$\arabic{enumi}.$}
\def\SR{\mathrm{SRT}^2_2}
\def\PHt{\mathrm{PH}^2_2}
\def\BII{\mathrm{B}\Sigma^0_2}
\renewcommand{\labelenumi}{$\arabic{enumi}.$}
\newcounter{menum}
{\begin{enumerate}%
\setcounter{enumi}{#1}}%
{\setcounter{menum}{\value{enumi}}\end{enumerate}}
\newtheorem{thm}{Theorem}[section]
\newtheorem{theorem}[thm]{Theorem}
\newtheorem*{theorem*}{Theorem}
\newtheorem{claim}{Claim}[thm]
\newtheorem*{claim*}{Claim}
\newtheorem{prop}[thm]{Proposition}
\newtheorem{lem}[thm]{Lemma}
\newtheorem{cor}[thm]{Corollary}
\newtheorem{proposition}[thm]{Proposition}
\newtheorem{lemma}[thm]{Lemma}
\theoremstyle{definition}
\newtheorem{definition}[thm]{Definition}
\newtheorem{remark}[thm]{Remark}
\newtheorem{question}[thm]{Question}
\def\mathname#1{\ensuremath{\mathsf{#1}}}
\def\RCA{\mathname{RCA}_0}
\def\WKL{\mathname{WKL}_0}
\def\WWKL{\mathname{WWKL}_0}
\def\ACA{\mathname{ACA}_0}
\def\MLR{\mathsf{MLR}}
\def\CR{\mathname{CR}}
\def\mathname#1{\ensuremath{\mathsf{#1}}}
\def\tup{\textup{M}}
\renewcommand\phi{\varphi}
\renewcommand\epsilon{\varepsilon}
\def\restriction{|}
\def\M{\mathcal{M}}
\def\S{\mathcal{S}}
\def\v{\mathbf{v}}
\def\rng{\text{rng}}
\newenvironment{proofclaim}{\begin{trivlist}\item[]{\emph{Proof.}}\rm}{\hfill $\Diamond$\end{trivlist}} 
\DeclareMathAccent{\wtilde}{\mathord}{largesymbols}{"65}
\def\D{\underaccent{\wtilde}{D}}
\begin{document}
\newpage
%\thispagestyle{empty}
%\pagenumbering{roman}
%\setcounter{page}{0}
%\keywords{}

\title{The reverse mathematics of theorems of  Jordan   and  Lebesgue}

\author[1]{Andr\'e Nies}
\author[2]{Marcus A.\ Triplett}
\author[3]{Keita Yokoyama}

\affil[1]{\small \sf{andre@cs.auckland.ac.nz}}
\affil[2]{\small \sf{marcus.triplett@uq.edu.au}}
\affil[3]{\small \sf{y-keita@jaist.ac.jp}}

%\date{
% First draft: February 29, 2016
% \\
%Second draft: January 9, 2021
%This draft: \today
%}

 \date{\today}

\blfootnote{Nies' work is partially supported by the Marsden fund of New Zealand. 
Yokoyama's work is partially supported by
 JSPS KAKENHI (grant numbers 19K03601 and 15H03634)
% JSPS-NUS Bilateral Joint Research Projects J150000618,
% JSPS fellowship for research abroad,
 and JSPS Core-to-Core Program (A.~Advanced Research Networks).
}

\maketitle
\def\Bexp{\mathrm{B}\Sigma_{1}+\mathrm{exp}}
\newcommand\RF{\mathrm{RF}}
\newcommand\tpl{\mathrm{tpl}}
\newcommand\col{\mathrm{col}}
\newcommand\fin{\mathrm{fin}}
\newcommand\Log{\mathrm{Log}}
\newcommand\Ct{\mathrm{Const}}
\newcommand\It{\mathrm{It}}
\renewcommand\PHt{\widetilde{\mathrm{PH}}{}}
\newcommand\BME{\mathrm{BME}_{*}}
\newcommand\HT{\mathrm{HT}}
\newcommand\Fin{\mathrm{Fin}}
\newcommand\FinHT{\mathrm{FinHT}}
\newcommand\wFinHT{\mathrm{wFinHT}}
\newcommand\FS{\mathrm{FS}}
\newcommand\LL{\mathsf{L}}
\newcommand\GPg{\mathrm{GP}}
\newcommand\GP{\mathrm{GP}^{2}_{2}}
\newcommand\FGPg{\mathrm{FGP}}
\newcommand\FGP{\mathrm{FGP}^{2}_{2}}
\newcommand\SGP{\mathrm{SGP}^{2}_{2}}
\newcommand\Con{\mathrm{Con}}
\newcommand\WF{\mathrm{WF}}
\newcommand\bb{\mathbf{b}}

% Define Author note colors:
\definecolor{lightred}{rgb}{1,.60,.60}
\newcommand{\andre}[1]{\sethlcolor{lightred}\hl{#1}}
\newcommand{\keita}[1]{\sethlcolor{yellow}\hl{#1}}
\newcommand{\andref}[1]{\sethlcolor{lightred}\hl{\footnote{\hl{#1}}}}
\newcommand{\keitaf}[1]{\sethlcolor{yellow}\hl{\footnote{\hl{#1}}}}

\begin{abstract}
The Jordan decomposition theorem states that every  function $f \colon \, [0,1]  \to \R$ of bounded variation can be written as  the difference of two non-decreasing functions.  Combining this fact with a  result of Lebesgue, every function of bounded variation is  differentiable almost everywhere in the sense of Lebesgue measure. We analyse the strength of   these  theorems in the setting of reverse mathematics.  Over $\RCAo$, a  stronger version  of Jordan's result  where all functions are continuous is equivalent to $\ACA$, while the version  stated is equivalent to $\WKL$.  The result that every function on $[0,1]$ of bounded variation is almost everywhere differentiable is equivalent to $\WWKL$.  To state this equivalence in a meaningful way,  we develop a theory of Martin-L\"of randomness over $\RCAo$. 
\end{abstract}

\tableofcontents

\section{Introduction} A main topic  of reverse mathematics is to determine  the axiomatic  strength of theorems from  classical analysis. For instance, the base system $\RCAo$ proves the intermediate value theorem. Over $\RCAo$,  the fact that every continuous real  function on $[0,1]$  is uniformly continuous is equivalent to the   system $\WKLo$, while  the   Bolzano-Weierstrass theorem (every bounded  sequence of reals has a convergent subsequence) is equivalent to  the stronger system $\ACAo$ (see  \cite[Thms.\  II.6.6, IV.2.3  and  III.3.2]{SOSOA}, respectively). 

Our purpose is to determine the strength of two important, interrelated theorems from analysis.  Interpreting these theorems over $\RCAo$   necessitates to develop some  theory of representations of functions and of Martin-L\"of randomness over this weak base system. 
\subsection{The axiomatic strength of Jordan's decomposition  theorem}   Jordan's  theorem,  dating from 1879,  states that every    function   $f \colon [0,1] \to \R$ of bounded variation can be written as $g-h$ where $g$ and $h$ are nondecreasing functions (see e.g.\  \cite{Carothers:00} for background on real analysis).  One calls the pair $g,h$ a \emph{Jordan decomposition} of $f$. In the setting of real analysis,  the proof that a Jordan decomposition exists is simple: let $g(x)$ be the variation of $f$ from $0$ to $x$, and let $h= g-f$. However,  even if $f$ is computable in the usual sense of computable analysis, the function $g$ is not necessarily computable: the variation of $f$, which equals $g(1)$,  can be any non-negative left-c.e.\ real  by  Rettinger and Zheng~\cite[Thm.\ 3.3.(ii)]{Rettinger.Zheng:04}. They also give in Thm.\ 5.3 an example  of a computable function of bounded variation without any computable Jordan decomposition.  Since the computable sets form a model of $\RCAo$, it follows that Jordan's theorem cannot be proved in $\RCAo$.  

Our first main topic is to determine the strength of Jordan's theorem. It turns out that its  strength depends on which functions we admit in a decomposition. The version where all functions involved are continuous  is equivalent to $\ACAo$. The version where  the non-decreasing functions $g,h$ in the decomposition can be discontinuous is equivalent to $\WKLo$. For the second version,  we need to develop a theory of representing such functions $g, h$  in models of $\RCAo$. In Definition~\ref{df:ratpres} we introduce rational presentations of functions,  which broadly speaking provide  information about all possible inequalities  $g(p)< q$ and $g(p) >q$ for rationals $p,q$, while leaving open equalities.  

Greenberg, Miller and Nies  \cite[Thm.\ 1.4 and Section 2.3]{Greenberg.Miller.ea:nd}, going back to unpublished work with Slaman, built a computable function of bounded variation such that any continuous Jordan decomposition computes  the halting problem, and every   Jordan decomposition allowing discontinuity computes a completion of Peano arithmetic. To prove some  of our  results above,  we adapt  their methods to the setting of reverse mathematics. This will  require considerable additional effort.

\subsection{The axiomatic strength of Lebesgue's theorem on a.e.\ differentiability}

Lebesgue \cite{Lebesgue:1909} proved that every nondecreasing function $f$  is  almost everywhere differentiable.  By Jordan's theorem, it  follows that the same conclusion holds for functions of bounded variation. See e.g.\ \cite[Thm.\ 20.6 and Cor.\ 20.7]{Carothers:00}. %In particular, each function of bounded variation .

Our second main topic is the strength of this theorem and of its corollary. We show that with  reasonable interpretations of ``almost everywhere" and ``differentiable" that work over $\RCAo$,   both are  equivalent to weak weak K\"onig's Lemma $\WWKL$ introduced by  Simpson and Yu~\cite{YS1990}, which roughly speaking states that every tree of positive measure has a path. Showing this  requires recasting a fair amount of the methods of Brattka, Miller and Nies~\cite{{BMN2016}} over $\RCAo$. In one important place they used $\Sigma^0_2$-bounding (in the  form of the  infinitary pigeon hole principle), which is not allowed in $\RCAo$. So we have to circumvent this.
To get around the fact that a computable  function of bounded variation may not  have a computable Jordan decomposition, they use a set computing a completion of Peano arithmetic, and   relativize   randomness to it. Since such sets  are unavailable within $\RCAo$, in Lemma~\ref{lemma2} we will instead recast this idea   using an argument  of Simpson and Yokoyama~\cite{SY2011}. They extend  a model of $\WWKL$ to a model of $\WKL$ in a restrictive way, in that  for each of the new sets $A$,  some  set in the given model is random relative to  $A$. This is one of the few examples from earlier years where methods stemming from the algorithmic theory of randomness have been reviewed with the mindset of reverse mathematics. 

 It is interesting that of our two topics,  proving Jordan decomposability requires the stronger systems, even though  differentiation appears to be a more complex  operation than taking a  Jordan decomposition. In fact when we say that $f$ is differentiable at $z$  we cannot assert that the limit of slopes around $z$ exists in the model of $\RCA$, as this would be equivalent to $\ACA$ when considering suitable functions. To get around this we work with the concept of pseudo-differentiability  going back to Demuth \cite{Demuth:75}: $f$ is pseudo-differentiable at $z$ if the slopes get closer and closer to each other as one zooms in on $z$ (similar to the case of  Cauchy sequences).      If $f$ is continuous at $z$ and pseudo-differentiable at $z$, then $f$ is differentiable at $z$ (but  the value of the derivative at $z$   may still not exist in the model).  
 
 We mention that Shafer and the first author~\cite{Nies.Shafer:20}  have recently looked at  further connections between  reverse mathematics and randomness. They consider randomness notions for infinite bit sequences. For instance, they study the reverse mathematical content of a well-known result: the characterization of 2-randomness of a bit sequence $Z$  via the   plain Kolmogorov complexity of initial segments $Z\upharpoonright n$. 
 
\section{Preliminaries}

%In Section \ref{effectivejordan} we showed that proving decomposability of functions of bounded variation could not be undertaken in $\RCA$, and i

\subsubsection*{Effectively uniformly continuous functions} We make the following definitions within $\RCA$, borrowing terminology from computable analysis. An \emph{effectively} uniformly continuous function $f:[0,1] \to \mathbb{R}$ is presented by a \emph{Cauchy name}: a sequence $(f_s)_{s \in\mathbb{N}}$ of rational polynomials (or, alternatively,  polygonal functions with rational breakpoints) such that $||f_s - f_r||_\infty \le 2^{-s}$ for all $r > s$. The sequence $(f_s)_{s\in\mathbb{N}}$ is intended to describe $f = \lim_{s \to \infty} f_s$. 
Within $\RCAo$ this definition is equivalent to the definition of continuous functions with a modulus of uniform continuity  given in \cite[Def.\ IV.2.1]{SOSOA}.
Note  that a uniformly continuous function may not have a modulus of uniform continuity within $\RCAo$. In contrast, within $\RCAo$ a continuous function with  a Cauchy name   always has a modulus of uniform continuity,  and vice versa.

\subsubsection*{Functions of bounded variation}
Suppose that  $\Pi = \{t_0, \dots, t_n\}$ is a partition of an interval $[a,b]$, \textit{i.e.}, $a=t_{0}<t_{1},\dots,t_n=b$ (abbreviated by~$\Pi\lhd [a,b]$). We let 
$$V(f, \Pi) = \sum_{i=0}^{n-1} |f(t_{i+1}) - f(t_i)|.$$

\noindent We say that a continuous function $f:[0,1]\to \R$ is of \emph{bounded variation} if there is $k \in \mathbb{N}$ such that $V(f, \Pi) \le k$ for every partition $\Pi$ of $[0,1]$. We define bounded variation in this way in order to avoid  declaring that the supremum exists. We write $\v_{f}(t)=\sup_{\Pi\lhd [0,t]}V(f,\Pi)$, and $\v_{f}=\v_{f}(1)$ in case  the sup exists.
For a given rational number $q\in\Q$, we will use the assertion ``$\v_{f}(t)\le q$'' in the  sense above. It  can be expressed by a $\Pi^{0}_{1}$ formula independent of the sup exists.

%
%Note that we only work with effectively uniformly continuous functions. In computable analysis functions with domain $[0,1]$ are effectively uniformly continuous by definition, so we can't use results from the previous sections to prove general theorems about functions which may only be continuous.
%
%
%
%In this section, we will study the differentiability of functions of bounded variation within $\WWKLo$.
%
%This section studies an attempt at establishing a principle in reverse mathematics which postulates the equivalence of a statement asserting differentiability to one asserting the existence of random reals.

%\textbf{Martin-L{\"o}f random reals in second-order arithmetic.} Within $\RCA$, a \emph{Martin-L{\"o}f test} is a uniformly c.e.\ sequence of pairs $(p_s,q_s)_{s\in\mathbb{N}}$ of dyadic rationals such that $0 \le q_s - p_s \le 2^{-s}$ for all $s$. Let $z \in [0,1]$ be given by a Cauchy name $(a_s)_{s\in\mathbb{N}}$. Then $z$ is \emph{Martin-L{\"o}f random} if for every \ML test $(p_s,q_s)_{s\in\mathbb{N}}$ one has $z \not \in \bigcap_s(p_s, q_s)_\mathbb{R}$. That is, there exist $j, s$ such that $a_s \le p_j - 2^{-s}$ or $a_s \ge q_j + 2^{-s}$. 
%
%In the context of second-order arithmetic, we use $\MLR$ to denote the set of all \ML random reals. Relativizing the definition of \ML randomness, we use $\MLR^X$ to denote the set of reals which are \ML random relative to $X$.
%\\

\section{Jordan decomposition for effectively uniformly continuous BV functions}\label{section:jdecomp-cont}
%The Jordan decomposition theorem \ref{jordan} can be strengthened: if $f$ is effectively uniformly continuous then the functions $f^+$ and $f^-$ comprising its decomposition can be chosen to be effectively uniformly continuous too. We first study the complexity of Jordan's theorem for effectively uniformly continuous functions whose decomposition constituents are \emph{required} to be effectively uniformly continuous.
Jordan's      theorem states that for every  function $f$ of bounded variation there is  a pair of   non-decreasing functions $g,h$, called a \emph{Jordan decomposition},  such that $f= g-h$. 
%In this section we work with the broader notion of Jordan decomposition, where the components of the decomposition are not required to be minimal. 
For functions $f, g : [0,1] \to \mathbb{R}$, write $$f \le_\mathsf{slope} g \text{ iff } \forall x \forall y [x < y \to f(y) - f(x) \le g(y) - g(x)];$$ i.e., the slopes of $g$ are at least as big as the slopes of $f$. Finding a Jordan decomposition of $f$ is equivalent to finding a non-decreasing function $g$ such that $f \le_\mathsf{slope} g$: If $f= g-h$ for non-decreasing functions $g,h$, then $f \le_\mathsf{slope} g$. Conversely, if $f \le_\mathsf{slope} g$ for a non-decreasing function  $g$,  then $h= g-f$ is nondecreasing and  $f = g - h$.

We consider  a strong  version of the Jordan decomposition theorem: 
the principle $\mathsf{Jordan}_\mathtt{cont}$, which  states that for every  continuous function $f$ of bounded variation,
there exist non-decreasing effectively uniformly continuous functions $g,h:[0,1] \to \mathbb{R}$ such that $f=g-h$. 
Equivalently,
there is a non-decreasing effectively uniformly continuous function $g:[0,1] \to \mathbb{R}$ such that $f \le_{\mathsf{slope}} g$. 

\begin{theorem} \label{jordancont}
The following are equivalent over $\RCAo$.
\begin{enumerate}
 \item $\ACAo$
 \item $\mathsf{Jordan}_\mathtt{cont}$
% , that is, for every continuous function $f$ of bounded variation,
%there exist non-decreasing effectively uniformly continuous functions $g,h:[0,1] \to \mathbb{R}$ such that $f=g-h$. 
%\andre{what do you mean when stuff is in ()? Both versions?}\keita{Yes, I meant both versions. If it is confusing, effectively uniformly continuous version is stronger, so that version may be better.} 
 \item For every \emph{effectively uniformly} continuous function $f$ of bounded variation,
there exist non-decreasing continuous functions $g,h:[0,1] \to \mathbb{R}$ such that $f=g-h$.
\end{enumerate}
% $\RCA \vdash  \leftrightarrow \ACA$. 
 \end{theorem}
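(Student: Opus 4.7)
The plan is to prove the cyclic implications $(1) \Rightarrow (2) \Rightarrow (3) \Rightarrow (1)$. The middle implication $(2) \Rightarrow (3)$ is immediate: every effectively uniformly continuous function is in particular continuous, and the stronger guarantee on $g,h$ in (2) certainly gives the conclusion of (3).

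For $(1) \Rightarrow (2)$, I argue in $\ACAo$. Given a continuous $f \colon [0,1]\to\R$ of bounded variation, I form the variation function $\v_f(x) = \sup_{\Pi \lhd [0,x]} V(f,\Pi)$ as a real for each rational $x$: since ``$\v_f(x) > q$'' is $\Sigma^0_1$ (witnessed by a partition) and ``$\v_f(x) \le q$'' is $\Pi^0_1$ (as remarked in the Preliminaries), arithmetic comprehension produces the Dedekind cut of $\v_f(x)$. Setting $g = \tfrac12(\v_f+f)$ and $h = \tfrac12(\v_f - f)$ gives two nondecreasing functions with $g - h = f$, since $\v_f(y)-\v_f(x) \ge |f(y)-f(x)|$ for $x<y$. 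For effective uniform continuity, given $\epsilon>0$ I use $\ACAo$ to choose a partition $\Pi = \{t_0,\ldots,t_n\}$ of $[0,1]$ with $V(f,\Pi) > \v_f(1) - \epsilon$ and mesh fine enough that $|f(t_{i+1})-f(t_i)|\le\epsilon$ for all $i$; then by additivity of variation over the partition, $\v_f(t_{i+1}) - \v_f(t_i) \le |f(t_{i+1})-f(t_i)| + \epsilon \le 2\epsilon$, which furnishes a modulus of uniform continuity for $\v_f$ and hence for $g$ and $h$.

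The substantive direction is $(3) \Rightarrow (1)$, which I would establish by adapting the construction of Greenberg, Miller and Nies \cite{Greenberg.Miller.ea:nd} to reverse mathematics. To derive $\ACAo$ in a model of $\RCAo + (3)$, it suffices to show that $\rng F$ exists as a set for every injection $F\colon \N \to \N$. Given such $F$, I build inside the model an effectively uniformly continuous BV function $f$ as a Cauchy limit $f = \lim_s f_s$ of piecewise-linear approximations: at the stage $s$ at which $F(s) = n$ is first enumerated, a thin triangular bump is attached to $f$ at a reserved point $x_n \in (0,1)$, of height bounded by $2^{-s}$ (to preserve the Cauchy condition $\|f_{s+1}-f_s\|_\infty \le 2^{-s}$) and with total ``weight'' encoding $n$ --- if necessary by aggregating several bumps across subsequent stages. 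The total variation of $f$ is kept bounded by an absolute constant. By (3), a continuous Jordan decomposition $(g, h)$ of $f$ exists in the model, and the plan is to recover $\rng F$ by reading $g$-increments $g(y_n) - g(z_n)$ over reserved intervals $[z_n,y_n]$ bracketing each $x_n$.

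The main obstacle is that the given $(g,h)$ need not coincide with the canonical decomposition $\bigl(\tfrac12(\v_f+f),\tfrac12(\v_f-f)\bigr)$: in general $g$ and $h$ differ from these by a common continuous nondecreasing ``slack'' function $e\ge 0$, and the $g$-increment near $x_n$ therefore carries an $e$-contribution that may mask the bump signal. What should rescue the argument is that on pairwise disjoint intervals the $e$-contributions sum to at most $e(1)-e(0) < \infty$, so for any fixed threshold only sparsely many $n$ can exhibit an $e$-increment above it; a redundant encoding placing several independent ``slots'' per $n$ and decoding by a $\Delta^0_1(g,h)$ majority verdict should correctly determine $\rng F$. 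The delicate part will be to carry this through in $\RCAo$ without appealing to $\Sigma^0_2$-bounding or to priority diagonalisation arguments, neither of which is available under the restricted induction of $\RCAo$; instead a direct, uniform construction must be designed so that every decoding step reduces to a bounded search on a primitive-recursive approximation to $g$ and $h$.
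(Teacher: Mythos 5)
Your implications $(1)\Rightarrow(2)$ and $(2)\Rightarrow(3)$ are correct and match the paper: the paper likewise forms $\v_f$ by arithmetical comprehension and takes the canonical decomposition (your extra argument extracting a modulus of uniform continuity for $\v_f$ from a near-optimal partition is fine, though in $\ACAo$ one can also just invoke the fact that every coded continuous function on $[0,1]$ has such a modulus).

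The gap is in $(3)\Rightarrow(1)$, and it sits exactly where you flag the ``delicate part.'' Your decoding reads the $g$-increment over a reserved interval around $x_n$ and compares it to a threshold, using redundancy and a majority vote to absorb the slack $e=g-\tfrac12(\v_f+f)$. Two problems. First, for a coded continuous $g$ the comparison ``$g(y_n)-g(z_n)>\varepsilon$'' is $\Sigma^0_1$ and ``$\le\varepsilon$'' is $\Pi^0_1$; a majority verdict over finitely many slots still requires deciding each comparison, and the slack can sit exactly at the threshold, so your decoding is not $\Delta^0_1$ as claimed. Second, the number of slots needed per $n$ depends on $(g(1)-g(0))\cdot 2^{n}$ (the bumps for $n$ must have weight roughly $2^{-n}$ to keep $\v_f$ finite), so the redundancy cannot be fixed in advance of seeing $g$, and you give no construction realizing it. The paper's decoding avoids all of this by a one-sided argument: the intervals $I_k$ carrying potential bumps for $n$ accumulate at a single point $q_n=1-2^{-n-1}$ from the left, so \emph{continuity of $g$ at $q_n$} guarantees $\exists k\,[g(q_n)-g(q_{n,k})<2^{-n}]$; a $\Sigma^0_1$-uniformization produces such a $k=\gamma(n)$ by bounded search. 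Since any actual bump with $h(k)=n<k$ forces $g(q_n)-g(q_{n,k})\ge 2^{-n}$ (here the slack only helps), every preimage of $n$ is below $\max\{\gamma(n),n+1\}$, and $n\in\rng(h)$ is decided by a bounded search through $h$ itself --- $g$ is never consulted for a ``no'' answer. This accumulation-point mechanism is also precisely what makes continuity of $g$ essential (and why the discontinuous version drops to $\WKLo$); your reserved-interval scheme does not use continuity in any essential way and so cannot be the right shape of argument without substantial repair.
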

\begin{proof}
To show 1 $\Rightarrow$ 2, given a continuous function $f$ of bounded variation, we construct   a code for a continuous function $\v_f$.
Note that within $\ACAo$, $\v_{f}(t)$ always exists, and one can describe the function $t\mapsto \v_{f}(t)$ by an arithmetical formula.
Thus, one can easily construct a code for $\v_{f}$ by   arithmetical comprehension.
%By \cite[Theorem VIII.1.12]{simpson2009subsystems}, we may use $\ACA$ to declare that the jump $\{f_s : s\in\mathbb{N}\}'$ of the representation of $f$ exists. For each $n \in \mathbb{N}$ define $g_n:[0,1] \to \mathbb{R}$ as follows. Using $\{f_s : s\in\mathbb{N}\}'$, compute $t$ such that $\forall u > t [||\v_{f_u} - \v_{f_t}||_{\infty} \le 2^{-n}]$, and set $g_n = \v_{f_t}$. 
Then $g=\v_{f}$ is the desired function.

\noindent 2 $\Rightarrow$ 3 is trivial.
To show 3 $\Rightarrow$ 1,
 %Fix a model of $\RCA + \mathsf{Jordan}_\mathtt{cont}$. L
 let $$q_n = 1 - 2^{-n - 1} \text{, and } q_{n,s} = q_n - 2^{-n - s - 1}.$$
$\ACA$  is equivalent to the following: if   $h : \mathbb{N} \to \mathbb{N}$ is  an injective function, then the range of $h$ exists \cite[Lemma III.1.3]{SOSOA}. The plan is to encode the  range of $h$   into the variation of an effectively uniformly continuous function~$f$.

For $v \in \mathbb R^+$ and $r \in \mathbb N$, we let $\tup_A(v, r)$ denote a ``sawtooth" function on the interval $A$ with $r$ many teeth of height $v$. Given an injective function  $h$, for each $s \in \mathbb{N}$, define a continuous function $f_s$ as follows. On each interval of the form $I_k = [q_{h(k), k}, q_{h(k), k+1}]$ put 
$$f_s = \begin{cases} \tup_{I_k}(2^{-k}, 2^{k - h(k)}) & \textrm{if $s\ge k > h(k)$,} \\ 0 & \textrm{otherwise.} \end{cases}$$
Let $f_s = 0$ elsewhere. 
Note that on $I_{k}$, $f_{s}=f_{t}$ if $t>s\ge k$ or $k>t>s$, and $\|f_{s}-f_{t}\|_{\infty}\le 2^{-s}$ if $t\ge k>s$.
Thus, the sequence $(f_s)_{s \in \mathbb{N}}$ defines an effectively uniformly continuous function $f = \lim_{s \to \infty}f_s$.
%, as $||f_t - f_s||_\infty \le 2^{-s}$ for all $t > s$. 
We show that $f$ is of bounded variation with bound $1$. Note that we only need to examine the variation of $f$ on the disjoint intervals $[q_{h(k),k}, q_{h(k),k+1}]$ since $f= 0$ elsewhere. 

Let $m \in \mathbb{N}$. For $k \in \{0, \dots, m\}$, let $\Pi_k$ partition $I_k$. We estimate the variation of $f$ on the interval $\bigcup_{k \le m} I_k$. Without loss of generality we may assume that each partition contains the midpoints and endpoints of the sawteeth defined on $I_k$.\footnote{Indeed, this only refines the partition and provides an improved estimate.} This allows us to easily compute the variation of $f$ as the piece-wise combination of non-decreasing functions. 
%For all $s \ge \text{max}\{h(k) : 0 \le k \le m\}$ one has
For all $s \ge m$ one has
\begin{align*}
\sum \limits_{k = 0}^m V(f, \Pi_k) = \sum \limits_{k = 0}^m V(f_s, \Pi_k) \le \sum \limits_{k = 0}^m 2^{-h(k) + 1}  < 1,
\end{align*}
which establishes the desired bound.

By $\mathsf{Jordan}_\mathtt{cont}$, take $g:[0,1] \to \mathbb{R}$ non-decreasing and continuous such that $f \le_\mathsf{slope} g$.
%Note that $g$ is presented by a Cauchy name $(g_s)_{s\in\mathbb{N}}$. 
Given that the range of $h$ is encoded in the variation of $f$, we will use the (easily computable) variation of $g$ on the interval $[q_{n, k}, q_{n, k+1}]$ to bound to possible pre-images   of $n$ under $h$.  

Define a $\Delta^{0}_{1}$ definable function $\gamma :\mathbb{N} \to \mathbb{N}$ such that $g(q_n) - g(q_{n, \gamma(n)}) < 2^{-n}$ as follows. There is a $\Sigma^0_0$ formula $\theta(n, m, k)$ such that $$\exists m \,  \theta(n, m, k) \leftrightarrow g(q_n) - g(q_{n,k}) < 2^{-n}.$$
Since $g$ is continuous and $\lim_{s \to \infty} q_{n,s} = q_n$ one has $\forall n \exists k \exists m \, \theta(n, m, k).$ As this sentence is $\Pi^0_2$, given any instance of the variable $n$ one can effectively obtain a witness $\langle m, k \rangle$ for the $\Sigma^0_1$ formula $\exists k \exists m \theta(n, m, k)$ (see, e.g.\ \cite[Theorem II.3.5]{SOSOA}). Thus 
%by minimization 
we may put $\gamma(n) = k$, where $\langle m, k \rangle$ is least such that $\theta(n,m,k)$ holds.
 
Now if $h(k) = n<k$ then by the monotonicity of $g$, $$g(q_n) - g(q_{n,k}) \ge g(q_{n,k+1}) - g(q_{n, k}).$$ Let $\Pi$ be a partition of $[q_{n,k}, q_{n,k+1}]$ containing the endpoints and midpoints of each sawtooth defined on that interval. Then since $g - f \le g$ and the variation of an increasing function is the difference of its values at its endpoints one has \begin{align*} 2^{-n + 1} = V(f, \Pi) & = V(g - (g - f), \Pi) \le V(g, \Pi) + V(g - f, \Pi) \le 2(g(q_{n,k+1}) - g(q_{n,k})).\end{align*}
Thus $g(q_n) - g(q_{n,k}) \ge 2^{-n}$, and then $k < \gamma(n)$. Hence 
$$n \in \rng(h) \leftrightarrow \exists k < \max\{\gamma(n),n+1\} [h(k) = n], $$
 so the range of $h$ exists by $\Delta^0_1$ comprehension.
\end{proof}

\section{Jordan decomposition for BV functions of rational domain} \label{section:jdecomp-rat}

 In the foregoing section,  we required that  a Jordan decomposition  consist of effectively uniformly continuous functions. Then   the Jordan decomposition theorem  has the same axiomatic strength as  $\ACA$.
 To see this, we encoded the range of an injective function $h$ into the variation of a function~$f$ of bounded variation. A Jordan decomposition of  $f$ into uniformly continuous functions allowed us to recover enough information to decide whether some number was the image of another under the injective function $h$.
   
We now relax the requirement on the Jordan decomposition by only stipulating that the decomposition is given by functions which are defined on the rationals. 
 Such functions can be represented by finite strings that cumulatively describe the behaviour of the function at each rational.  We will see that   such simple  objects do not allow the  encoding of  sets of high complexity.

Greenberg, Miller and Nies  \cite{Greenberg.Miller.ea:nd}   proved that there is a  computable function $f$ on $[0,1]$ of bounded variation  such that every Jordan decomposition of $f$  in this weak sense is PA-complete.
One direction of our argument, 4 $\Rightarrow$ 1 of Theorem~\ref{jordanq}, is based on their proof; extra effort is required to make it work over  $\RCA$
as a base theory.

\subsection{Rational presentations of functions}
%Throughout this paper, we fix an effective listing $( p_n, q_n )_{n \in \mathbb{N}}$ of all pairs of rationals $p_n,q_n \in \mathbb{Q}$ such that $0 \le p_n \le 1$.
%Let $[0,1]_\mathbb{Q} := [0,1]\cap \mathbb{Q}$. We present a function $g : [0,1]_\mathbb{Q} \to \mathbb{R}$ by a binary sequence $Z$ in the following way. We let $Z(n) = 1$ if $g(p_n) < q_n$, and  $Z(n) = 0$ if $g(p_n) > q_n$.
%We leave open  the value of $Z(n)$ in case that  $g(p_{n})=q_{n}$; both 0  and 1 are  allowed. The formal definition follows. 
Let $[0,1]_\mathbb{Q} := [0,1]\cap \mathbb{Q}$. We present a function $g : [0,1]_\mathbb{Q} \to \mathbb{R}$ by a set $Z\subseteq[0,1]_{\Q}\times\Q$ in the following way.
We require that  $(p,q)\in Z$ if $g(p) < q$, and  $(p,q)\notin Z$ if $g(p) > q$.
We leave open whether $(p,q)\in Z$   in case that  $g(p)=q$. %just state, don't overstate
The formal definition follows. 
%%%%%%%
\begin{definition} \label{df:ratpres}A set $Z\subseteq[0,1]_{\Q}\times\Q$ is called  a \emph{rational presentation}   if
\begin{itemize}
 \item[(i)]    for any $p\in[0,1]_{\Q}$, there exist $q,q'\in\Q$ such that $(p,q)\in Z$ and $(p,q')\notin Z$, and
 \item[(ii)]  for any $p\in[0,1]_{\Q}$ and for any $q,q'\in \Q$ with $q < q'$,  $(p,q)\in Z$ implies $(p,q')\in Z$.
 \end{itemize}
 %   \andre{simplified this}
 %%%%%%%%%
A rational presentation  $Z$ determines  a  function $g_{Z}: [0,1]_{\Q} \to \mathbb{R}$ via  \[g_{Z}(p) = \inf \{q \in \mathbb{Q} : (p,q)\in Z\}.\]  
We say that $Z$ is a rational presentation of $g_Z$ (and also of any function on $[0,1]$ extending $g_Z$). \end{definition}
One can determine    $g_{Z}(p)$ within $\RCAo$ since for any $n\in\N$, one can effectively find $q,q'\in \Q$ such that $(p,q)\in Z$, $(p,q')\notin Z$,  and $|q-q'|\le 2^{-n}$.
Even though a rational presentation of a function  is not unique if the function has some  rational value, we  sometimes  identify $Z$ with $g_{Z}$.  

For given $x,y,z\in \Q$ and a rationally presented function $g_{Z}:[0,1]_{\Q}\to\R$,
the assertion ``$g_{Z}(x)-g_{Z}(y)\le b$'' is expressed  by a  $\Pi^{0}_{1}$ formula with free variables $Z, x,y,b$: 
%
%  \[\forall r,s\in\N  \, [p_{r}=x  \wedge p_{s}=y   \wedge Z(r)=0\wedge Z(s)=1\ \to  \ q_{r}-q_{s}\le b]. \]
\[\forall p_{0},p_{1},q_{0},q_{1}\in\Q  \, [p_{0}=x  \wedge p_{1}=y   \wedge (p_{0},q_{0})\notin Z\wedge (p_{1},q_{1})\in Z\ \to  \ q_{0}-q_{1}\le b]. \]

Similarly, ``$g_{Z}(x)-g_{Z}(y)\ge b$'' can be expressed by a $\Pi^{0}_{1}$ formula, and ``$g_{Z}(x)-g_{Z}(y)< z$'' and ``$g_{Z}(x)-g_{Z}(y)> z$''   by $\Sigma^{0}_{1}$ formulas.
Thus, the assertion ``$\v_{g_{Z}}(x)\le z$'' is also expressed by a $\Pi^{0}_{1}$ formula.
(Here, we only consider partitions with rational end points.)
We say that $g_{Z}$ is  of bounded variation if $\v_{g_{Z}}(1)\le k$ for some $k\in\N$.

%\begin{lem}[$\WKLo$]
%Every continuous function $f:[0,1]\to \R$ has a rational presentation, \textit{i.e.}, there exists a rational presentation $Z$ such that $f\rest[0,1]_{\Q}=g_{Z}$.
%\end{lem}
%\begin{proof}
%If $f$ is a continuous function, then the sets $\{s\in \N: f(p_{s})>q_{s}\}$ and $\{s\in \N: f(p_{s})<q_{s}\}$ are both $\Sigma^{0}_{1}$ and disjoint.
%Thus, by $\Sigma^{0}_{1}$ separation which is available within $\WKLo$, there exists a set $Z$ such that $f(p_{s})<q_{s}\to Z(s)=1$ and $f(p_{s})>q_{s}\to Z(s)=0$.
%\end{proof}
%\begin{quest}
%Does the assertion ``every (effectively uniformly) continuous function $f:[0,1]\to \R$ has a rational presentation'' imply $\WKLo$ over $\RCAo$?
%\end{quest}

%\keita{added: reformulation of the avoidance of rational values}

A function $f:[0,1]_{\Q}\to \R$ can be canonically  encoded by a function $f:[0,1]_{\Q}\times\N\to \Q$ such that $|f(p,n)-f(p,n+k)|\le 2^{-n}$ for each $p\in [0,1]_{\Q}$ and $n,k\in\N$. Rational presentations are essentially sufficient for  presenting all real-valued  functions   on $[0,1]_{\Q}$: as we show next,
within $\RCAo$ any function $f:[0,1]_{\Q}\to \R$ has a rational presentation up to a   vertical shift.

%
%On the other hand, one can avoid this problem by a simple vertical shift as follows.
\begin{lem}[$\RCAo$]\label{lem:shift-for-r-presentation}
%For every continuous function $f:[0,1]\to \R$, or every weakly rationally presented function $f:[0,1]_{\Q}\to \R$, there exists a real $a\in \R$ such that $f+a$ avoids rational numbers on $[0,1]_{\Q}$.
For every  function $f:[0,1]_{\Q}\to \R$, there exists a real $a\in \R$ such that $f(p)+a \not \in \Q$  for any $p\in [0,1]_{\Q}$.
\end{lem}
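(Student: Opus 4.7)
The plan is to build $a$ by diagonalization against a countable enumeration of the ``forbidden'' reals $q - f(p)$ for $p \in [0,1]_\mathbb{Q}$ and $q \in \mathbb{Q}$, using a nested-interval construction that is carried out by primitive recursion inside $\RCA_0$.

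\medskip
\noindent\textbf{Step 1: enumerate the bad set.}
Fix a bijection $e : \mathbb{N} \to [0,1]_\mathbb{Q} \times \mathbb{Q}$ available in $\RCA_0$, and for each $n$ with $e(n) = (p_n, q_n)$ set $b_n := q_n - f(p_n)$. Since $f$ is coded as a function $[0,1]_\mathbb{Q} \times \mathbb{N} \to \mathbb{Q}$ with $|f(p,k) - f(p,k+j)| \le 2^{-k}$, the sequence $(b_n)_{n \in \mathbb{N}}$ is a uniformly given sequence of reals; concretely, $b_n(k) := q_n - f(p_n, k)$ is a rational approximation of $b_n$ within $2^{-k}$.

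\medskip
\noindent\textbf{Step 2: build nested intervals of length $3^{-n}$ that avoid $b_n$.}
I will define, by recursion on $n$, rationals $c_n \le d_n$ with $d_n - c_n = 3^{-n}$ and $[c_{n+1}, d_{n+1}] \subseteq [c_n, d_n]$, such that $b_n \notin [c_{n+1}, d_{n+1}]$. Start with $c_0 = 0$, $d_0 = 1$. Given $[c_n, d_n]$, split it into the left quarter, middle half, and right quarter (of lengths $\tfrac14 \cdot 3^{-n}$, $\tfrac12 \cdot 3^{-n}$, $\tfrac14 \cdot 3^{-n}$). Compute a rational approximation $\tilde b_n$ of $b_n$ within $\tfrac18 \cdot 3^{-n}$ from the given Cauchy data; then if $\tilde b_n \le (c_n + d_n)/2$, let $[c_{n+1}, d_{n+1}]$ be the right quarter, otherwise the left quarter (rescaled down to length $3^{-n-1}$ inside that quarter, e.g.\ its left end). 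In either case $b_n$ is at distance at least $\tfrac18 \cdot 3^{-n}$ from the chosen quarter, so $b_n \notin [c_{n+1}, d_{n+1}]$. The recursion uses a $\Delta^0_1$ rule, so the sequence $(c_n, d_n)_{n \in \mathbb{N}}$ exists in $\RCA_0$ by $\Sigma^0_1$ primitive recursion.

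\medskip
\noindent\textbf{Step 3: extract the limit and verify.}
The sequence $(c_n)$ is a Cauchy name for a real $a \in [0,1]$ (with modulus $3^{-n}$), and $a \in [c_m, d_m]$ for every $m$ by nesting. Suppose toward a contradiction that $f(p) + a = q$ for some $p \in [0,1]_\mathbb{Q}$ and $q \in \mathbb{Q}$; then $a = q - f(p) = b_n$ where $n = e^{-1}(p, q)$. But by construction $b_n \notin [c_{n+1}, d_{n+1}]$, while $a \in [c_{n+1}, d_{n+1}]$, contradicting $a = b_n$ (using the $\Pi^0_1$ expressibility of ``$a \ne b_n$'' via the explicit separation $\tfrac18 \cdot 3^{-n}$). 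Hence $f(p) + a \notin \mathbb{Q}$ for every $p \in [0,1]_\mathbb{Q}$.

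\medskip
\noindent\textbf{Expected obstacle.}
There is no serious mathematical obstacle; the argument is a standard Baire-style diagonalization. The only point requiring care is that $\RCA_0$ admits just $\Sigma^0_1$ induction and $\Delta^0_1$ comprehension, so the recursive step must be genuinely effective in the code of $f$ and the enumeration $e$. The choice of quarter widths (rather than thirds) ensures a fixed effective separation $\tfrac18 \cdot 3^{-n}$ between $a$ and $b_n$, which makes both the recursion rule and the final ``$a \neq b_n$'' verification $\Delta^0_1$, placing the whole argument safely inside $\RCA_0$.
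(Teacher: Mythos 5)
Your proof is correct and is essentially the paper's argument: both diagonalize against the countably many forbidden reals by building $a$ as a fast-converging limit of rationals that at stage $n$ keeps a definite quantitative distance (your $\tfrac18\cdot 3^{-n}$, the paper's $4^{-i}/2 - 4^{-i}/3$) from the $n$-th forbidden value, which is exactly what keeps the construction effective and hence available in $\RCAo$. If anything yours is slightly more careful: you enumerate pairs $(p,q)$ and avoid $q - f(p)$ (which is what the statement actually requires), whereas the paper's write-up as literally given only avoids the values $f(p_i)$, and your recursion rule is deterministic ($\Delta^0_1$) where the paper appeals to $\Sigma^0_1$ dependent choice.
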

\begin{proof}
Let $\{p_{i}\}_{i\in\N}$ be an enumeration of $[0,1]_{\Q}$.
We   recursively define a sequence of rationals $\{a_{i}\}_{i\in\N}$ as follows.
Let $a_{0}=0$.
%For given $a_{i}\in \Q$ we   let  $a_{i+1}\in\Q$ such that  $|a_{i}-a_{i+1}|<2^{-4i}$  and  $|f(p_{i})+a_{i+1}-q_{i}|>2^{-4i-1}$  .

For given $a_{i}\in \Q$ we let  $a_{i+1}\in\Q$ such that  $|a_{i}-a_{i+1}|<4^{-i}$  and  $|f(p_{i})-a_{i+1}|>4^{-i}/2 $. %\andre{changed quantities}
One can always pick such $a_{i+1}$ effectively since the required condition on $a_{i+1}$ given $a_i$ is $\Sigma^{0}_{1}$.
(Here we use the well-known fact  that a dependent choice function for a $\Sigma^{0}_{1}$ binary predicate of  numbers is available within $\RCAo$.
See, e.g., the argument in the proof of \cite[Theorem II.5.8]{SOSOA}, or \cite[Theorem~2.1]{Yokoyama-thesis}.)
%(\keita{better reference for here?}\footnote{I don't think this is an appropriate reference but I don't know the standard one. I explicitly mentioned this in my PhD thesis~\cite[Theorem~2.1]{Yokoyama-thesis}, but definitely it is used in many older reverse math papers.})
%\andre{is this some standard argument within RCA? Explain briefly}
Put $a=\lim_{n\to\infty}a_{i}$ and note that $|a - a_{i+1}| \le 4^{-i}/3$. Therefore $|f(p_{i})-a| > 4^{-i}/2 - 4^{-i}/3>0$. 
\end{proof}
\begin{prop}\label{prop:rational-presentation-in-RCA} \mbox{}
\begin{enumerate}
 \item[(i)] $\WKLo$ proves that every function $f:[0,1]_{\Q}\to \R$ has a rational presentation.
 
  \item[(ii)] $\RCAo$ proves that every function $f:[0,1]_{\Q}\to \R$ has a rational presentation up to a vertical shift.  That is, there exists a rational presentation $Z$ and a real $r\in\R$ such that $f+r=g_{Z}$.
\end{enumerate}
\end{prop}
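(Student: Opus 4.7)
The plan is to handle (ii) by reducing $f$ to an irrational-valued function via Lemma~\ref{lem:shift-for-r-presentation}, and to handle (i) by invoking the equivalence of $\WKLo$ with $\Sigma^0_1$-separation.

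For (ii), I would apply Lemma~\ref{lem:shift-for-r-presentation} to get $a\in\R$ such that $\tilde f(p):=f(p)+a$ is irrational for every $p\in[0,1]_{\Q}$. Then for each rational $q$ the two relations $\tilde f(p)<q$ and $\tilde f(p)>q$ are both $\Sigma^0_1$ in the Cauchy data for $f$ and~$a$, and by irrationality they are complementary in $q$. Hence
\[
Z := \{(p,q)\in[0,1]_{\Q}\times\Q : \tilde f(p)<q\}
\]
has simultaneously a $\Sigma^0_1$ and a $\Pi^0_1$ description, so $\Delta^0_1$-comprehension within $\RCAo$ produces $Z$ as a set. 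Clause (i) of Definition~\ref{df:ratpres} is witnessed by taking rationals above and below the real $\tilde f(p)$; clause (ii) is immediate from transitivity of $<$. Finally $g_Z(p)=\inf\{q:\tilde f(p)<q\}=\tilde f(p)=f(p)+a$, so $f+a=g_Z$ witnesses the claim with $r=a$.

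For (i), I would use that $\WKLo$ is equivalent over $\RCAo$ to $\Sigma^0_1$-separation (see, e.g., \cite[Lemma IV.4.4]{SOSOA}). Apply it to the disjoint $\Sigma^0_1$ sets
\[
A=\{(p,q):f(p)<q\},\qquad B=\{(p,q):f(p)>q\},
\]
to obtain a set $Z$ with $A\subseteq Z$ and $Z\cap B=\emptyset$. I claim $Z$ is already a rational presentation of $f$. Clause (i) of Definition~\ref{df:ratpres} holds by picking $q,q'\in\Q$ with $q'<f(p)<q$: then $(p,q)\in A\subseteq Z$ and $(p,q')\in B$, so $(p,q')\notin Z$. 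Clause (ii) holds because $(p,q)\in Z$ forces $(p,q)\notin B$, i.e.\ $f(p)\le q$; any $q'>q$ then satisfies $q'>f(p)$, placing $(p,q')$ in $A\subseteq Z$. Squeezing gives $g_Z(p)=\inf\{q:(p,q)\in Z\}=f(p)$, since every rational exceeding $f(p)$ lies in $Z$ while no rational below $f(p)$ does.

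I expect no substantive obstacle. The only subtle point is that the upward-closure condition (ii) of Definition~\ref{df:ratpres} is not built into $\Sigma^0_1$-separation; but, as shown above, it comes for free because the only freedom in the separating set occurs at the single rational value $q=f(p)$, which cannot destroy upward-closure of $\{q:(p,q)\in Z\}$. The main bookkeeping in a full write-up will be confirming that the required inequalities on $f(p)$ really are $\Sigma^0_1$ from the Cauchy-name presentation of $f$, which is routine.
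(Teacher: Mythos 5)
Your proposal is correct and follows essentially the same route as the paper: part (i) via $\Sigma^0_1$-separation applied to the sets $\{(p,q):f(p)<q\}$ and $\{(p,q):f(p)>q\}$, and part (ii) by first shifting $f$ off the rationals using Lemma~\ref{lem:shift-for-r-presentation} so that these sets become $\Delta^0_1$. Your explicit verification of clauses (i) and (ii) of Definition~\ref{df:ratpres} for an arbitrary separating set, and of the identity $g_Z=f$, is just the detail the paper leaves implicit.
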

\begin{proof}
(i) Consider the $\Sigma^{0}_{1}$-definable
% \andre{Sigma 1?} 
 sets $\mathcal{A}=\{(p,q)\in [0,1]_{\Q}\times \Q: f(p)< q\}$ and $\mathcal{B}=\{(p,q)\in [0,1]_{\Q}\times \Q: f(p)> q\}$.
To obtain a rational presentation for $f$, it  suffices  to find a set $Z\subseteq [0,1]_{\Q}\times \Q$ such that $\mathcal{A}\subseteq Z\subseteq [0,1]_{\Q}\times \Q\setminus \mathcal{B}$.
Such a set $Z$ is obtained by an instance of $\Sigma^{0}_{1}$-separation, an axiom scheme  which follows from  $\WKLo$ over $\RCAo$ (\cite[Lemma~IV.4.4]{SOSOA}).

\noindent (ii)  We may assume that $f$ avoids rational numbers after passing to a vertical shift of $f$  via  Lemma~\ref{lem:shift-for-r-presentation}. Then  both   $\mathcal{A}$ and $\mathcal{B}$ are $\Delta^{0}_{1}$. 
\end{proof}
%The following is now immediate. 
\begin{cor}\label{cor:rational-presentation-in-RCA}
\begin{enumerate}
 %, \textit{i.e.}, there exists a rational presentation $Z$ and a real $r\in\R$ such that $f+a=g_{Z}$.
 \item[(i)] $\WKLo$ proves that the restriction to $[0,1]_{\Q}$ of any continuous function $f:[0,1]\to \R$ has a rational presentation.
  \item[(ii)] $\RCAo$ proves that  the restriction to $[0,1]_{\Q}$ of any  continuous function $f:[0,1]\to \R$ has a rational presentation on $[0,1]_{\Q}$ up to a vertical shift.
\end{enumerate}
\end{cor}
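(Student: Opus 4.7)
The plan is to reduce both parts of the corollary directly to the corresponding parts of Proposition~\ref{prop:rational-presentation-in-RCA}. The only thing one really needs to verify is that, within $\RCAo$, any continuous function $f:[0,1]\to\R$ gives rise to a function $f\rest [0,1]_\Q:[0,1]_\Q\to\R$ in the canonical form described just before Lemma~\ref{lem:shift-for-r-presentation}, i.e., an encoding $[0,1]_\Q\times\N\to\Q$ satisfying the required Cauchy condition $|f(p,n)-f(p,n+k)|\le 2^{-n}$.

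To do this, I would recall that within $\RCAo$ a code for a continuous function on $[0,1]$ is (equivalently, by the remarks in the Preliminaries) a Cauchy name of rational polygonal approximations $(f_{s})_{s\in\N}$ with $\|f_{r}-f_{s}\|_{\infty}\le 2^{-s}$ for $r>s$. From such a code, given $p\in[0,1]_\Q$ and $n\in\N$, one computes uniformly a rational $f(p,n):=f_{n+1}(p)$ with $|f(p,n)-f(p)|\le 2^{-n-1}$, which yields the canonical encoding by $\Delta^{0}_{1}$ comprehension. Part (i) then follows by applying Proposition~\ref{prop:rational-presentation-in-RCA}(i) to $f\rest[0,1]_\Q$ within $\WKLo$, and part (ii) follows by applying Proposition~\ref{prop:rational-presentation-in-RCA}(ii) within $\RCAo$.

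I do not anticipate any substantive obstacle here: the corollary is essentially a direct specialization of the proposition, and producing the canonical encoding of $f\rest[0,1]_\Q$ from a code of a continuous function is an entirely routine construction in $\RCAo$. The only minor point to keep in mind is that the vertical shift provided by Lemma~\ref{lem:shift-for-r-presentation} in case (ii) is a shift of $f\rest[0,1]_\Q$, not a shift of $f$ itself, but this matches precisely the statement of the corollary.
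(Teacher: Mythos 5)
Your overall strategy is the same as the paper's: produce the canonical encoding of $f\rest[0,1]_\Q$ as a function $[0,1]_\Q\times\N\to\Q$ and then invoke Proposition~\ref{prop:rational-presentation-in-RCA}. But the way you produce that encoding contains a genuine error. You assert that a code for a continuous function on $[0,1]$ is ``equivalently, by the remarks in the Preliminaries, a Cauchy name of rational polygonal approximations.'' The Preliminaries say the opposite: a Cauchy name is the definition of an \emph{effectively uniformly} continuous function, it is equivalent to having a modulus of uniform continuity, and a continuous (even uniformly continuous) function on $[0,1]$ need \emph{not} have such a modulus within $\RCAo$ --- indeed, the statement that every continuous function on $[0,1]$ has one is equivalent to $\WKLo$. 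So your construction $f(p,n):=f_{n+1}(p)$ is only available for effectively uniformly continuous $f$. For part (i) this is harmless, since $\WKLo$ supplies the modulus; but part (ii) is asserted over $\RCAo$ for \emph{arbitrary} continuous $f$, and there your reduction does not go through as written.

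The fix is standard and is what the paper does in its two-line proof: use the coding of continuous functions by (pairs of) rational intervals as in \cite[Def.~II.6.1]{SOSOA}, and the fact that $\RCAo$ proves one can effectively evaluate such a coded function at any point of its domain. For each $p\in[0,1]_\Q$ and $n\in\N$ an unbounded search through the code yields a rational within $2^{-n}$ of $f(p)$; since ``for all $(p,n)$ a witness exists'' is $\Pi^0_2$-true, the resulting Skolem function exists by $\Delta^0_1$ comprehension (the same device used in the proof of Theorem~\ref{jordancont}). This gives the canonical encoding of $f\rest[0,1]_\Q$, and both parts then follow from the Proposition exactly as you intended. Your closing remark that the vertical shift in (ii) is a shift of $f\rest[0,1]_\Q$ is correct and matches the statement.
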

\begin{proof}
Within $\RCAo$, one can effectively find the value of a continuous function.
Thus, the restriction to $[0,1]_{\Q}$ of a continuous function $f:[0,1]\to\R$ has a canonical encoding.
% \andre{need to say something here. Why does Z for $g$ exist? We need to restrict the result of the Cauchy name to the rationals and show RCA knows this process}
\end{proof}

%We would like to find a rational presentation for a continuous function.
%However, it is impossible within $\RCAo$ in general.
Taking a vertical shift is essential in the above discussion: an effectively uniformly continuous function itself might not have a rational presentation within $\RCAo$. To see this apply the next fact to a recursively inseparable pair. 

\begin{prop}\label{prop:con-vs-rp}
Given a disjoint pair  $A,B$ of c.e.\ sets,  there is a computable nondecreasing function $f \colon [0,1] \to \R$ such that every  rational presentation computes a set $X$ such that $A \subseteq X \subseteq \N \setminus B$.  
\end{prop}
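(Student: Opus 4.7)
My plan is to encode the separation problem directly into the values of $f$ at specifically chosen rationals. I take $q_n := p_n$ for a sequence of distinct rationals $p_n \in (0,1)$, and construct $f$ so that $f(p_n) < q_n$ precisely when $n \in A$ and $f(p_n) > q_n$ precisely when $n \in B$, with $f(p_n) = q_n$ when $n \notin A \cup B$. By the defining clauses of Definition~\ref{df:ratpres}, any rational presentation $Z$ of $f$ must then satisfy $(p_n, q_n) \in Z$ whenever $n \in A$ and $(p_n, q_n) \notin Z$ whenever $n \in B$. Hence $X := \{n : (p_n, q_n) \in Z\}$ is computable from $Z$ and satisfies $A \subseteq X \subseteq \N \setminus B$.

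For the construction, I would fix a computable enumeration $(A_s, B_s)$ and choose $p_n := 2^{-n-1}$ with disjoint closed neighborhoods of radius $w_n := 2^{-n-3}$. Let $b_n$ be the continuous piecewise-linear tent of height $1$ supported on $[p_n - w_n, p_n + w_n]$ with $b_n(p_n) = 1$. Let $s_n$ denote the stage at which $n$ enters $A \cup B$ (undefined if it never does), and set $\epsilon_n := \min(2^{-s_n}, w_n/2)$ together with $c_n := -1, +1, 0$ according as $n \in A$, $n \in B$, or $n \notin A \cup B$. Define
\[
  f(x) := x + \sum_{n \in \N} c_n \epsilon_n b_n(x).
\]
Because the tents have pairwise disjoint supports, at each $x$ at most one term is nonzero, so $f$ is well-defined pointwise. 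For computability, the stage-$s$ approximation $f_s(x) := x + \sum_{n \in A_s \cup B_s} c_n \epsilon_n b_n(x)$ is polygonal with rational breakpoints, and any difference $f_r - f_s$ for $r > s$ is supported on disjoint intervals indexed by $n$ with $s_n > s$, each of height at most $\epsilon_n \le 2^{-s_n} \le 2^{-s-1}$; hence $\|f_r - f_s\|_\infty \le 2^{-s}$, so $(f_s)$ is a Cauchy name and $f$ is effectively uniformly continuous.

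It remains to verify monotonicity and the encoding property. On either half of $[p_n - w_n, p_n + w_n]$ the added term has slope of absolute value at most $\epsilon_n/w_n \le 1/2$, so the total slope of $f$ is at least $1/2$ on each such half and exactly $1$ elsewhere; hence $f$ is nondecreasing. For $n \in A$ one has $f(p_n) = p_n - \epsilon_n < p_n = q_n$, and for $n \in B$ one has $f(p_n) = p_n + \epsilon_n > p_n = q_n$, which gives the required strict inequalities. The main obstacle is coupling the perturbation size $\epsilon_n$ to the enumeration stage $s_n$ in such a way that $f$ remains effectively uniformly continuous while still producing a positive gap $f(p_n) \neq q_n$ at every $n \in A \cup B$; the choice $\epsilon_n := \min(2^{-s_n}, w_n/2)$ resolves this, since the perturbations live on pairwise disjoint intervals so their sup-norm contributions do not accumulate, and they are small enough to both preserve monotonicity and to give Cauchy convergence at rate $2^{-s}$.
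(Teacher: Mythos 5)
Your proposal is correct and follows essentially the same strategy as the paper's proof: perturb the value of a computable nondecreasing function at designated dyadic points $p_n$ downward when $n$ enters $A$ and upward when $n$ enters $B$, with the perturbation size tied to the enumeration stage so the function stays computable, and then decode by asking the rational presentation whether $(p_n,p_n)\in Z$. The only cosmetic difference is that you realize $f$ as the identity plus disjointly supported tents rather than by linear interpolation between the perturbed values $f(2^{-e})=r_e$.
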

\begin{proof}[Sketch of proof] 
We define a uniformly computable  sequence of reals $(r_e) $ such that $r_e$ is very close to $2^{-e}$; say  $|r_e - 2^{-e}| \le 2^{-2e}$. The function $f$ is then obtained by linear interpolation between the values $f(2^{-e})= r_e$; in particular $f(0)=0$ and $f$ is computable in the usual sense of computable analysis.

We  define $r_e$ using a Cauchy name, as follows. Initially we let  $r_e = 2^{-e}$. 
If  stage $s$ is least such  that $s\ge 2e$ and $e \in A_s$   we subtract $2^{-s}$ to $r_e$ and leave $r_e$  at this value. 
If  stage $s$ is least such  that $s\ge 2e$ and  $e \in B_s$ we add $2^{-s}$ from  $r_e$ and leave $r_e$  at this value.  

% Given  $e$ we can compute $n=n_e$  such that $p_n = q_n = 2^{-e}$. 
If $Z$ is  a rational presentation of $f$, let $X = \{e \colon \, (2^{-e},2^{-e}) \in Z\}$. If $e \in A$ then $f(2^{-e})< 2^{-e}$ and hence $e \in X$. If $e \in B$ then $f(2^{-e}) >  2^{-e}$ and hence $e \not \in X$. \end{proof}

%\begin{prop}\label{prop:con-vs-rp}
% There is a computable nondecreasing function $f \colon [0,1] \to \R$ which has no computable rational presentation. In particular,  $\RCAo$ doesn't imply that every effectively uniformly continuous function has a rational presentation.
%\end{prop}
%\begin{proof}[Sketch of proof] 
%We define a uniformly computable  sequence of reals $(r_e) $ such that $r_e$ is very close to $2^{-e}$; say  $|r_e - 2^{-e}| \le 2^{-2e}$. The function $f$ is obtained by linear interpolation between the values $f(2^{-e})= r_e$; in particular $f(0)=0$ and $f$ is computable in the usual sense of computable analysis.  
%
%
%Let $\psi_e$ be the $e$-th partial computable $0,1$ valued function. We diagonalise against a total $\psi_e$ that pretends  to be  a rational presentation for $f$. For each $e$,  as a candidate pick  $n=n_e$  such that $p_n = q_n = 2^{-e}$. 
%
%We  define $r_e$ using a Cauchy name, as follows. Initially we let  $r_e = 2^{-e}$. 
%If  stage $s$ is least such  that $s\ge 2e$ and $\psi_{e,s} (n_e) = 1$ (indicating that we should have  $f(2^{-e}) \le 2^{-e}$) we add $2^{-s}$ to $r_e$ and leave $r_e$  at this value. 
%If  stage $s$ is least such  that $s\ge 2e$ and  $\psi_{e,s} (n_e) = 0$ (indicating that we should have  $f(2^{-e}) \ge 2^{-e}$) we subtract  $2^{-s}$ from  $r_e$ and leave $r_e$  at this value. 
%\end{proof}

\begin{prop}\label{prop:con-vs-rp-inRM}
The assertion ``every effectively uniformly continuous function $f:[0,1]\to \R$ has a rational presentation'' implies $\WKLo$ over $\RCAo$.
\end{prop}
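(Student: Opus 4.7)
The plan is to derive $\Sigma^{0}_{1}$-separation, which by \cite[Lemma~IV.4.4]{SOSOA} is equivalent to $\WKLo$ over $\RCAo$. Given disjoint $\Sigma^{0}_{1}$ formulas $\varphi_{A}(e) \equiv \exists s\,\theta_{A}(e,s)$ and $\varphi_{B}(e) \equiv \exists s\,\theta_{B}(e,s)$ (with $\theta_{A},\theta_{B}$ bounded), I would formalize the construction in the proof sketch of Proposition~\ref{prop:con-vs-rp} inside $\RCAo$ in order to produce a single effectively uniformly continuous function $f$ from which \emph{any} rational presentation codes a separating set.

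Concretely, set $r_{e} = 2^{-e}$ initially and, if some stage $s \ge 2e$ first witnesses $\theta_{A}(e,s)$ (resp.\ $\theta_{B}(e,s)$), freeze $r_{e}$ at $2^{-e}-2^{-s}$ (resp.\ $2^{-e}+2^{-s}$); disjointness of $\varphi_{A}$ and $\varphi_{B}$ makes this unambiguous. I would then build a Cauchy name $(f_{s})_{s\in\N}$ of rational polygonal functions interpolating between the values $f(2^{-e}) = r_{e}$ and $f(0) = 0$. Any change to some $r_{e}$ occurring at a stage $s' > s$ must satisfy $s' \ge \max(2e,s+1)$ and has magnitude $2^{-s'}$, which bounds the resulting perturbation of $f_{s}$ by $2^{-s}$. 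Thus $(f_{s})_{s\in\N}$ is a Cauchy name, and the resulting $f$ is effectively uniformly continuous in $\RCAo$.

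By the hypothesis, $f$ admits a rational presentation $Z$. Let
\[X = \{e \in \N : (2^{-e}, 2^{-e}) \in Z\},\]
which exists by $\Delta^{0}_{1}$-comprehension from $Z$. If $\varphi_{A}(e)$ holds then $g_{Z}(2^{-e}) = f(2^{-e}) < 2^{-e}$, and the $\inf$-characterization of $g_{Z}$ together with clause (ii) of Definition~\ref{df:ratpres} forces $(2^{-e},2^{-e}) \in Z$, i.e.\ $e \in X$. Symmetrically $\varphi_{B}(e) \Rightarrow e \notin X$, so $X$ is a $\Sigma^{0}_{1}$-separator for the pair $(\varphi_{A},\varphi_{B})$.

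The main obstacle, and essentially the only step requiring genuine care, is the formalization: checking inside $\RCAo$ that the chosen polygonal approximations $(f_{s})_{s}$ truly meet the Cauchy-name bound $\|f_{s}-f_{t}\|_{\infty} \le 2^{-s}$ uniformly, without invoking induction beyond $\Sigma^{0}_{1}$. The delayed-action convention $s \ge 2e$ is exactly what makes the per-element perturbations fit the $2^{-s}$ budget and lets one read off $f$ as a \emph{single} effectively uniformly continuous function even though the witnesses for $\varphi_{A}$ and $\varphi_{B}$ arrive asynchronously. Once $f$ is in hand, the $\Delta^{0}_{1}$-extraction of $X$ and the transfer of the strict inequalities from $f$ through the rational presentation $Z$ proceed by the base theory alone.
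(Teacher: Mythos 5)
Your proposal is correct and follows exactly the route the paper intends: the paper's proof simply declares it "routine to transfer the computability-theoretic proof" of Proposition~\ref{prop:con-vs-rp} into a derivation of $\Sigma^0_1$-separation and then cites the same Lemma~IV.4.4 of \cite{SOSOA}, and your write-up is precisely that transfer carried out in detail (with the Cauchy-name bound and the $\Delta^0_1$ extraction of the separator handled correctly).
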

\begin{proof}  It is routine to transfer the computability theoretic proof above into an argument that the given assertion implies  $\Sigma^0_1$ separation over $\RCAo$. By \cite[Lemma~IV.4.4]{SOSOA} the scheme of $\Sigma^0_1$ separation is equivalent to $\WKL$ over $\RCAo$.
% \andre{is this kosher?} 
\end{proof}
The above discussions recast the problem of converting Cauchy representations and Dedekind cuts for reals studied by Hirst \cite{Hirst2007}. Proposition~\ref{prop:con-vs-rp-inRM} can be viewed as a strengthening of \cite[Theorem~7]{Hirst2007}.

\subsection{Jordan decomposition by rationally presented functions}

We modify the $\le_\mathsf{slope}$ notation for functions of rational domain. For $f, g : \subseteq [0,1] \to \mathbb{R}$ we let $$f \le^*_\mathsf{slope} g \text{ iff } \forall x, y \in [0,1]_\mathbb{Q} [x < y \to (f(y) - f(x) \le g(y) - g(x))].$$
We will use the following ``folklore"  fact for the next theorem.
\begin{lem}[$\WKLo$]\label{stWKL}
Every $\Pi^{0}_{1}$ definable infinite tree $T\subseteq 2^{<\N}$ has a path.
\end{lem}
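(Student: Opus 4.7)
The plan is to reduce to the standard form of $\WKLo$ by approximating $T$ from above with a $\Delta^0_1$ tree $T' \subseteq 2^{<\N}$ which contains $T$ and whose infinite paths all lie in $T$.

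Formally, fix a bounded formula $\theta(\sigma,s)$ so that $\sigma \in T \leftrightarrow \forall s\, \theta(\sigma,s)$, and define within $\RCAo$
$$T' = \{\sigma \in 2^{<\N} : \forall \tau \subseteq \sigma \; \forall s < |\sigma| \; \theta(\tau,s)\}.$$
Since the matrix is bounded, the set $T'$ exists by $\Delta^0_1$ comprehension. I would then verify three facts: (i) $T'$ is a tree, since shrinking $\sigma$ only relaxes the universally quantified conditions in the definition; (ii) $T \subseteq T'$, because any prefix of $\sigma \in T$ is again in $T$ and hence satisfies $\theta(\tau,s)$ for every $s$, so in particular $T'$ is infinite since $T$ is; (iii) every infinite path $f$ of $T'$ actually lies in $[T]$. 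For (iii), given $k$ and $s$, pick $n > \max(k,s)$ and use $f \rest n \in T'$ with $\tau = f \rest k$ and $s' = s$ to conclude $\theta(f \rest k, s)$; hence $f \rest k \in T$.

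Finally, applying $\WKLo$ in its standard formulation to the infinite $\Delta^0_1$ tree $T'$ produces an infinite path, which by (iii) is a path of $T$.

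No step presents a real obstacle. The only design choice to get right is the combination of the prefix quantifier $\forall \tau \subseteq \sigma$ (which is what prevents $T'$ from acquiring spurious paths outside $[T]$) with the bounded stage quantifier $\forall s < |\sigma|$ (which is what keeps the defining formula bounded, so that $T'$ exists in $\RCAo$). Everything else is routine verification.
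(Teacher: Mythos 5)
Your proof is correct and is essentially identical to the paper's: both define the $\Delta^0_1$ tree $\bar T=\{\sigma: \forall \tau\preceq\sigma\,\forall s\le|\sigma|\,\theta(\tau,s)\}$, observe $T\subseteq\bar T$ and $[\bar T]\subseteq[T]$, and apply standard $\WKLo$.
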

\begin{proof}
Suppose  that $\tau\in T\leftrightarrow \A n \, \theta(n,\tau)$.
By $\Delta^{0}_{1}$ comprehension, there exists a tree 
\[\bar{T}=\{\tau: \A n\le |\tau|\A \sigma\preceq\tau \, \theta(n,\sigma)\}.\]
Then, $T\subseteq \bar{T}$, and any path of $\bar{T}$ is a path of $T$ by the definition of $T$.
By $\WKLo$, $\bar{T}$ has a path, thus $T$ has a path.
\end{proof}
\begin{thm}[$\WKLo$]\label{jordanqq}
For every rationally presented function $f : [0,1]_\mathbb{Q} \to \mathbb{R}$ of bounded variation, there is a rationally presented non-decreasing function $g : [0,1]_\mathbb{Q} \to \mathbb{R}$ such that $f \le^*_\mathsf{slope} g$.
\end{thm}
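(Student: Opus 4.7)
My plan is to construct the desired rational presentation as a path through a $\Pi^{0}_{1}$-definable infinite tree, and invoke Lemma~\ref{stWKL}. Fix within $\RCAo$ an enumeration $\langle (x_k, y_k)\rangle_{k\in\N}$ of $[0,1]_{\Q}\times\Q$, and let $K\in\N$ bound $\v_f$. I define $T\subseteq 2^{<\N}$ by placing $\sigma\in T$ of length $n$ just when three conditions hold: \textup{(R)} for all $k_1, k_2 < n$ with $x_{k_1}=x_{k_2}$, $\sigma(k_1)\le\sigma(k_2)$ whenever $y_{k_1}<y_{k_2}$, and $\sigma(k_1)=\sigma(k_2)$ whenever $y_{k_1}=y_{k_2}$; \textup{(C)} for all $k_1, k_2 < n$ with $x_{k_1}<x_{k_2}$, if $\sigma(k_1)=0$ and $\sigma(k_2)=1$, then $y_{k_2}-y_{k_1}\ge\max\{0, f(x_{k_2})-f(x_{k_1})\}$; and \textup{(B)} for all $k<n$, $\sigma(k)=0$ if $y_k<-K-1$ and $\sigma(k)=1$ if $y_k>K+1$. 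Since (R) and (B) are $\Delta^{0}_{0}$ and (C) is $\Pi^{0}_{1}$ (the inner inequality involves the real $f(x_{k_2})-f(x_{k_1})$), $T$ is $\Pi^{0}_{1}$-definable and trivially closed under initial segments.

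The main step is to show in $\RCAo$ that $T$ is infinite. Given $n$, I sort $D_n:=\{x_k:k<n\}$ in increasing order as $a_0<a_1<\dots<a_{L-1}$ and define the positive-variation function $\hat g\colon D_n\to\R$ by $\hat g(a_0)=0$ and $\hat g(a_{\ell+1})=\hat g(a_\ell)+\max\{0, f(a_{\ell+1})-f(a_\ell)\}$. Telescoping yields $\hat g(a_j)-\hat g(a_i)\ge\max\{0, f(a_j)-f(a_i)\}$ for $i\le j$, while the hypothesis $\v_f\le K$ ensures $0\le\hat g\le K$. Because ``$\hat g(x_k)<y_k$'' is a $\Sigma^{0}_{1}$ predicate in $k$, bounded $\Sigma^{0}_{1}$-comprehension supplies a string $\sigma\in 2^n$ with $\sigma(k)=1\leftrightarrow\hat g(x_k)<y_k$. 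Using classically that $\sigma(k)=0$ implies $\hat g(x_k)\ge y_k$, a short case analysis confirms $\sigma$ satisfies (R), (C) and (B); for (C), if $\sigma(k_1)=0$ and $\sigma(k_2)=1$ with $x_{k_1}<x_{k_2}$ then $y_{k_2}-y_{k_1}>\hat g(x_{k_2})-\hat g(x_{k_1})\ge\max\{0, f(x_{k_2})-f(x_{k_1})\}$.

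An application of Lemma~\ref{stWKL} now delivers a path $\pi\in 2^{\N}$ through $T$; set $Z=\{(x_k, y_k):\pi(k)=1\}$. Clause (ii) of Definition~\ref{df:ratpres} is immediate from (R), and clause (i) follows from (B) by using the pairs $(p, -K-2)$ and $(p, K+2)$ for each $p$, so $Z$ presents some function $g_Z$. To verify that $g_Z$ is non-decreasing with $f\le^{*}_{\mathsf{slope}}g_Z$, I would argue by contradiction: if $g_Z(x_2)-g_Z(x_1)<r:=\max\{0, f(x_2)-f(x_1)\}$ for some $x_1<x_2$ in $[0,1]_{\Q}$, choose rationals $q_1, q_2$ with $g_Z(x_1)>q_1$, $g_Z(x_2)<q_2$, and $q_2-q_1<r$. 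Then the positions of $(x_1, q_1)$ and $(x_2, q_2)$ in the enumeration have $\pi$-values $0$ and $1$ respectively, directly violating (C). Since $r\ge 0$ and $r\ge f(x_2)-f(x_1)$, this simultaneously rules out $g_Z(x_2)<g_Z(x_1)$ and $g_Z(x_2)-g_Z(x_1)<f(x_2)-f(x_1)$.

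The principal obstacle I anticipate is the infiniteness argument for $T$ inside $\RCAo$: one must ensure that the witnessing string $\sigma$, defined from a $\Sigma^{0}_{1}$ predicate about real comparisons, is a legitimate object of the theory (this requires bounded $\Sigma^{0}_{1}$-comprehension, equivalently $\Sigma^{0}_{1}$-induction), and the borderline case $\hat g(x_k)=y_k$ must be handled consistently when checking clause (R). Once these points are settled, the remainder of the argument is a mechanical unpacking of the conditions defining $T$.
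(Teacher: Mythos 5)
Your proof is correct and takes essentially the same route as the paper's: a $\Pi^0_1$-definable tree of finite approximations to a rational presentation, cut out by boundedness, consistency/monotonicity, and slope clauses, shown infinite via a variation-type function obtained by bounded $\Sigma^0_1$-comprehension, and completed by Lemma~\ref{stWKL}. The only (harmless) deviations are that your slope clause uses the positive variation $\max\{0,f(y)-f(x)\}$ where the paper uses $|f(y)-f(x)|$, and that you witness infiniteness with a finitely supported positive-variation function on $\{x_k : k<n\}$ rather than the global $\v_f$ restricted to the listed rationals.
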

\begin{proof}
Let   $M\in\N$ such that $\v_{f}\le M$.
We fix an effective listing $( p_n, q_n )_{n \in \mathbb{N}}$ of all elements of $[0,1]_{\Q}\times\Q$,
and identify $Z:\N\to\{0,1\}$ as $\{(p_{n},q_{n}):Z(n)=1\}\subseteq [0,1]_{\Q}\times\Q$.
We construct a binary tree $T$ such that any path $Z$ through $T$ encodes a non-decreasing function $g :[0,1]_\mathbb{Q} \to \mathbb{R}$ with $f \le^*_{\mathsf{slope}} g$. To do so, we   ensure that  the following   conditions hold:
% at each stage $k$ for every potential function $g$:
\\

\begin{tabular}{l l}
$\mathcal{R}_0:$ & for any $r\in\N$, $0\le g(p_r) \le M$; \\
$\mathcal{R}_1:$ & for any $r,s\in\N$, if $p_s \le p_r$, $q_s \ge q_r$, and $g(p_s) > q_s$ then $g(p_r) > q_r$; \\
$\mathcal{R}_2:$ & for any $r,s\in\N$, if $p_s \le p_r$ then $f(p_r) - f(p_s) \le g(p_r) - g(p_s)$.
\end{tabular}\
\\

%\noindent The first requirement guarantees that any $g$ encoded by a path $Z_g$ through $T$ is non-decreasing. The second guarantees the slope condition.
\noindent Here, $\mathcal{R}_1$ guarantees that any $g$ encoded by a path $Z_g$ through $T$ is non-decreasing, and $\mathcal{R}_2$ guarantees the slope condition.
Formally, 
%let $\ell(\tau) = \left\lceil{\frac{|\tau| - 1}{2}}\right\rceil$. 
we will consider a $\Pi^{0}_{1}$ definable tree $T$ to be the set of all $\tau \in 2^{<\mathbb{N}}$ such that 
\begin{align*}
\text{(r0)}& \ \forall r < |\tau|  \Big[(q_{r}\le 0\to \tau(r) = 0) \wedge (q_{r}\ge M\to \tau(r) = 1)\Big],  \\
\text{(r1)}& \ \forall r, s < |\tau|  \Big[(p_r \le p_s  \land   q_r  \ge  q_s \land \tau(r) = 0) \to \tau(s) = 0\Big],  \\
\text{(r2)}& \ \forall r, s < |\tau| \Big[ (p_r \le p_s \land \tau(r) = 0 \land  \tau (s) = 1) \to | f(p_s) - f(p_r) | \le q_{s}-q_{r}\Big].
\end{align*}
%\begin{align*}
%\text{(i)} \ \forall r, s < \ell(\tau) & \Big[(p_s < p_r  \land   q_s  =  q_r \land \tau(2s+1) = 1) \to \tau(2r+1) = 1\Big], \text{ and }\\
%\text{(ii)} \ \forall r, s < \ell(\tau) & \Big[ (p_s < p_r \land \tau(2s) = 1 \land  \tau (2r+1) = 1) \\ & \quad \to \forall j < |\tau|(q_s - q_r < q_j \to | f_{|\tau|}(p_s) - f_{|\tau|}(p_r) | < q_j + 2^{-(|\tau| + 1)})\Big].
%\end{align*}

To see that $T$ is infinite, notice that since $f$ is of bounded variation, the string $Z_{v_{f}}\restriction_k$ defined as $s\in Z_{v_{f}}\restriction_k$ iff $\v_{f}(p_{s})<q_{s}$ and $s<k$, which is available from bounded $\Sigma^{0}_{1}$ comprehension (see \cite[Theorem~II.3.10]{SOSOA}), is an element of $T$ for every $k \in \mathbb{N}$. Thus by Lemma~\ref{stWKL}, $T$ has a path $Z$.
By (r0) and (r1) (for the case $p_{r}=p_{s}$), $Z$ encodes a rational presentation.
Let $g$ be the unique function  $ [0,1]_\mathbb{Q} \to \mathbb{R}$ defined as $g=g_{Z}$.

\begin{claim} The function $g$ is non-decreasing. \end{claim}

\begin{proofclaim} Take $x, y \in [0,1]_\mathbb{Q}$ with $x < y$. Let $q \in \mathbb{Q}$. It suffices to show that if $g(x) > q$ then $g(y) > q$. There are $r, s \in \mathbb{N}$ such that $p_s = y$, $q_s = q$, $p_r = x$, and $q_r = q$. If $g(p_r) > q_r$ then $Z(r)=0$, and then by clause (r1), $Z(s)=0$, which means $g(p_s) > q_s$. \end{proofclaim}

\begin{claim}$f \le^*_\mathsf{slope} g$. \end{claim}

\begin{proofclaim}
Let $x, y \in [0,1]_\mathbb{Q}$ such that $x < y$.
It is enough to show that for any $q\in\Q$ such that $g(y)-g(x)<q$, $|f(y)-f(x)|<q$.
By the definition of $g$, one can find $r,s\in\N$ such that $x=p_{r}$, $y=p_{s}$, $g(p_{r})>q_{r}$, $g(p_{s})<q_{s}$ and $q_{s}-q_{r}<q$.
Then, by (r2) we have $|f(y)-f(x)|=|f(p_{s})-f(p_{r})|\le q_{s}-q_{r}<q$.
\end{proofclaim}
Thus, this $g=g_{Z}$ is the desired function.
\end{proof}

%The principle $\mathsf{Jordan}_\mathbb{Q}$ is the statement that for every effectively uniformly continuous function $f$ of bounded variation, there is a non-decreasing function $g : [0,1]_\mathbb{Q} \to \mathbb{R}$ such that $f \le^*_\mathsf{slope} g$.

%\keita{(another essential use of induction)}
 It is a  well-known fact that every $\Pi^{0}_{1}$-class with only finitely many members has  a computable member.  
 Greenberg,  Nies and Slaman  used this fact   to build a  computable function $f$ on $[0,1]$ of bounded variation such that any  Jordan decomposition of $f$ is  PA-complete; see \cite[Section 2.3 and in particular Prop.\ 2.9]{Greenberg.Miller.ea:nd}.
  A natural formalization within $\RCAo$ of this fact is as follows: 
if an infinite tree $T$ has only boundedly many incomparable nodes that  are extendible, then $T$ has a  path that is computable relative to $T$. Simpson and Yokoyama~\cite{SY201X} showed that this formalization
 already requires $\Sigma^{0}_{2}$-induction.
Instead, we will use the following lemma.
\begin{lem}[$\RCAo$]\label{p-bound-tree}
Let  $T\subseteq 2^{<\N}$ be an infinite tree. If there is a bound on the cardinality of an arbitrary prefix-free subset of $T$, then $T$ has a path.
\end{lem}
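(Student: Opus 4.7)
I would prove the lemma by induction on the uniform bound $k$ on the cardinality of prefix-free subsets of $T$.

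For the base case $k=1$, any two nodes of $T$ are prefix-comparable, so $T$ is linearly ordered. Since $T$ is infinite and downward-closed, it contains exactly one node $\tau_n$ at each length $n$, with $\tau_n \prec \tau_{n+1}$; the function $p(n) := \tau_{n+1}(n)$ is $\Delta^0_1$-definable from $T$ and gives the path.

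For the induction step $k \ge 2$, assume the lemma for bound $k-1$ and let $T$ be infinite with antichain bound $\le k$. By classical logic, either there is no prefix-free antichain of size $k$ in $T$, in which case the bound is $\le k-1$ and the induction hypothesis applies, or there is one, which I obtain as $A = \{\tau_1, \dots, \tau_k\}$ by a $\Sigma^0_1$-search. In the latter case each subtree $T_i := \{\sigma \in T : \tau_i \preceq \sigma\}$ is a chain: for any antichain $B \subseteq T_i$, the set $B \cup (A \setminus \{\tau_i\})$ is prefix-free in $T$ (elements of $B$ extend $\tau_i$ and are hence incomparable with $\tau_j$ for $j \ne i$), forcing $|B| + (k-1) \le k$ and so $|B| \le 1$. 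Since $A$ has maximum size it is also maximal, so every node of $T$ is comparable with some $\tau_i$, and $T$ decomposes as the finite prefix-set of the $\tau_i$'s together with $T_1 \cup \cdots \cup T_k$.

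To locate an infinite $T_i$ within $\RCAo$, I observe that ``$T_i$ is finite'' is $\Sigma^0_1$: by downward closure of $T$, the absence of an extension of $\tau_i$ at some level forces absence at every higher level, so finiteness is equivalent to $\exists L\, \forall \tau \in 2^L\, (\tau_i \preceq \tau \to \tau \notin T)$. Since $i$ ranges over the bounded set $\{1, \dots, k\}$, bounded $\Sigma^0_1$-comprehension yields the set $I^* := \{i \le k : T_i \text{ is infinite}\}$, which is nonempty because $T$ is infinite. Taking $i^* := \min I^*$, the set $\{\sigma \in T : \sigma \preceq \tau_{i^*}\} \cup T_{i^*}$ is an infinite chain in $T$, and the base case applied to it produces the required path. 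The main obstacle is that the initial case-split ``antichain of size $k$ exists'' versus ``it does not'' is between $\Sigma^0_1$ and $\Pi^0_1$ and hence not effectively decidable; classical reasoning within $\RCAo$ resolves this, since each subcase independently produces a $\Delta^0_1$-definable path.
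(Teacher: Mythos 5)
Your argument is, in substance, the paper's own: find a prefix-free subset of $T$ of maximal cardinality, observe that above each of its members $T$ is a chain (the paper phrases this as ``every node of length $>\ell$ has at most one successor''), use a pigeonhole/comprehension step to locate a member with infinitely many extensions, and read off the path by a $\Delta^0_1$ recursion. All of those steps are sound as you present them. The one point that must be repaired is the outer ``induction on $k$'': the inductive statement ``every infinite tree with antichain bound $k$ has a path'' is (at least) $\Pi^1_2$, whereas $\RCAo$ provides induction only for $\Sigma^0_1$ formulas; and since $k$ lives in the (possibly nonstandard) first-order part of the model, you cannot treat the induction as meta-level either. Fortunately your induction hypothesis is invoked only in the branch where no size-$k$ antichain exists, so the entire recursion collapses into a single application of the $\Sigma^0_1$ maximum principle (a consequence of $\Sigma^0_1$ induction): take $m\le k$ maximal such that $T$ has a prefix-free subset of size $m$, fix a witness $A$ of that size, and run your ``latter case'' argument on $A$ directly. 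This is exactly how the paper sets it up. With that substitution, the rest of your proof --- the $\Sigma^0_1$ expression of ``$T_i$ is finite'', bounded $\Sigma^0_1$ comprehension for $I^*$, its nonemptiness (which, to pass from ``each $T_i$ is finite'' to ``$T$ is finite'', quietly uses $\Sigma^0_1$-bounding, again available in $\RCAo$), and the $\Delta^0_1$ path through an infinite chain --- goes through and matches the paper's reasoning.
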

\begin{proof}
Take $K\in\N$ so that $|P|<K$ for any prefix-free set $P\subseteq T$.
By $\Sigma^0_1$ induction, take
\begin{align}
 \label{line1} k = \max\{i\le K : \text{there is a prefix-free set $P \subseteq T$ with $|P| = i$}\}.
 \end{align}
 Let $P_k \subseteq T$ witness (\ref{line1}).
 Let $\sigma = \max P_k$, where the max is taken with respect to the usual integer encoding of binary strings.
  Let $\ell = \max\{|\tau| : \tau \in P_k\}$.
  Any $\tau \in T$ with $|\tau| > \ell$ must extend an element of $P_k$, and can have at most one successor.
% The $\Pi^0_1$ set 
%\[ \{ \tau \in P_k \colon \forall v \exists \rho \in T \, [ |\rho | =v  \land  \exists \tau' \in P_k (\tau' \le \tau \land  \tau'\preceq \rho)]  \}\]
%is non-empty because it contains $\sigma$.
%(Here, $\le$ means the order on $\N$.)
%Thus, by $\Sigma^{0}_{1}$ induction, it has a least element $\tau$.
By the pigeonhole principle (which is available from $\Sigma^{0}_{1}$ induction), there exists $\tau\in P_{k}$ with  infinitely many extensions in $T$.
% Then  $\tau\in \mathrm{Ext}(T)$, \textit{i.e.}, $\tau$ has infinitely many extensions in $T$.
 Since each extension of $\tau$ of  length exceeding $\ell$ has exactly one successor, we can effectively find a path through $ T$ extending $\tau$.
 \end{proof}

We now establish  the main theorem of this section.
\begin{theorem} \label{jordanq}
 %$\RCA \vdash \mathsf{Jordan}_{\mathbb{Q}} \leftrightarrow \WKL$. 
The following are equivalent over $\RCAo$.
\begin{enumerate}
 \item $\WKLo$.
 \item $\mathsf{Jordan}_\mathbb{Q}$: for every rationally presented function $f$ of bounded variation, there is a rationally presented non-decreasing function $g : [0,1]_\mathbb{Q} \to \mathbb{R}$ such that $f \le^*_\mathsf{slope} g$.
% \item For every rationally presented function $f$ of bounded variation, there is a rationally presented non-decreasing function $g : [0,1]_\mathbb{Q} \to \mathbb{R}$ such that $f \le^*_\mathsf{slope} g$.
 \item For every \emph{continuous} function $f$ of bounded variation, there is a rationally presented non-decreasing function $g : [0,1]_\mathbb{Q} \to \mathbb{R}$ such that $f \le^*_\mathsf{slope} g$.
 \item For every effectively uniformly continuous function $f$ of bounded variation which has a rational presentation, there is a rationally presented non-decreasing function $g : [0,1]_\mathbb{Q} \to \mathbb{R}$ such that $f \le^*_\mathsf{slope} g$.
\end{enumerate}
 \end{theorem}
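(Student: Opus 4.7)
We prove the cycle $1 \Rightarrow 2 \Rightarrow 3 \Rightarrow 4 \Rightarrow 1$. The implication $1 \Rightarrow 2$ is exactly Theorem~\ref{jordanqq}. For $2 \Rightarrow 3$, given a continuous function $f : [0,1] \to \R$ of bounded variation, Corollary~\ref{cor:rational-presentation-in-RCA}(ii) supplies a real $r$ such that $(f+r)|_{[0,1]_{\Q}}$ admits a rational presentation, which remains of bounded variation with the same bound as $f$. Applying (2) yields a rationally presented non-decreasing $g$ with $(f+r) \le^*_{\mathsf{slope}} g$, and since the slope condition is invariant under vertical translation of $f$, we conclude $f \le^*_{\mathsf{slope}} g$. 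The implication $3 \Rightarrow 4$ is immediate, since every effectively uniformly continuous function is continuous in the sense of $\RCAo$.

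The substantive direction is $4 \Rightarrow 1$, where we reduce to $\Sigma^0_1$-separation, equivalent to $\WKLo$ by \cite[Lemma~IV.4.4]{SOSOA}. The argument adapts the computability-theoretic construction of Greenberg, Miller and Nies~\cite{Greenberg.Miller.ea:nd} to the base theory $\RCAo$. Given disjoint $\Sigma^0_1$ sets $A$ and $B$ with standard $\Delta^0_1$ approximations $(A_s), (B_s)$, we construct a Cauchy name of polygonal functions with rational breakpoints and rational values defining an effectively uniformly continuous $f : [0,1] \to \R$ of bounded variation, together with a $\Delta^0_1$ rational presentation of $f$. On pairwise disjoint intervals $I_e \subseteq [0,1]$ of rapidly shrinking length, we reserve two candidate sites $L_e, R_e \subseteq I_e$ for small sawtooth perturbations of amplitude $2^{-e}$; at the first stage $s$ witnessing $e \in A_s$, the sawtooth is installed at $L_e$, and symmetrically at $R_e$ for $e \in B_s$. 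Disjointness of $A$ and $B$ ensures at most one site is ever activated per $e$, summability of the amplitudes bounds the total variation, and choosing each $f_s$ to take rational values that stabilise in $s$ at every rational point gives $f$ a $\Delta^0_1$ rational presentation.

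Hypothesis $(4)$ now supplies a rationally presented non-decreasing $g$ with $f \le^*_{\mathsf{slope}} g$. An installed sawtooth of amplitude $2^{-e}$ at $L_e$ forces $g$ to rise by at least $2^{-e}$ over $L_e$, and symmetrically for $R_e$, so comparing rational approximations to these rises (computable from $g$'s presentation) identifies the activated side. The main obstacle is that the natural formalisation of this side-selection step appeals to the fact that $\Pi^0_1$ classes with only finitely many paths contain a computable path, a principle which in $\RCAo$ requires $\Sigma^0_2$-induction, as noted just before Lemma~\ref{p-bound-tree}. We circumvent this by recasting the extraction as finding a path through an infinite $\Pi^0_1$-definable tree whose prefix-free antichains are uniformly bounded---the bound provided by the fixed total variation of $f$---and then invoking Lemma~\ref{p-bound-tree}, which is provable in $\RCAo$. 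The resulting path yields a $\Delta^0_1$ separator $X$ with $A \subseteq X \subseteq \N \setminus B$, completing the reduction.
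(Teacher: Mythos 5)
Your implications $1 \Rightarrow 2 \Rightarrow 3 \Rightarrow 4$ match the paper. The gap is in $4 \Rightarrow 1$. Reducing to $\Sigma^0_1$-separation is legitimate in principle, but your decoding step fails: from $f \le^*_{\mathsf{slope}} g$ one only obtains \emph{lower} bounds on the rise of $g$ over an interval (roughly half the variation of $f$ there), never upper bounds. So if $e$ enters $A$ and a sawtooth is installed at $L_e$, then $g$ must indeed rise there, but nothing prevents $g$ from rising just as much over the untouched site $R_e$; ``comparing the rises'' therefore does not identify the activated side. This is exactly the difficulty the Greenberg--Miller--Nies argument and the paper's proof are built to overcome, and it cannot be patched locally.

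Second, your appeal to Lemma~\ref{p-bound-tree} does not carry the argument on its own, for two reasons. (a) The tree of nodes where $g$ exhibits a jump of at least a fixed size has bounded prefix-free subsets only because incomparable nodes correspond to disjoint \emph{nested} intervals over which the rises of $g$ sum to at most $g(1)-g(0)$; your flat layout of pairwise disjoint intervals $I_e$ with two sites each carries no such tree structure, so the antichain bound (which comes from the bounded total rise of $g$, not from ``the fixed total variation of $f$'' as you state) has nothing to attach to. The paper instead indexes the sawtooth intervals by boundary strings of the given tree $T$ via $I_\sigma=[0.\sigma,\,0.\sigma+2^{-|\sigma|}]$. (b) Even with the correct nested layout, the jump tree $\widehat T$ need not be infinite: the paper's proof splits into two cases on $\Delta(n)=\max\{g(0.\sigma+2^{-|\sigma|})-g(0.\sigma):\sigma\in T,\ |\sigma|=n\}$. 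If $\Delta(n)\to 0$, one computes a modulus $\gamma$ by $\Delta^0_1$ comprehension, uses it to bound the search for preimages under the enumeration $h$ of the non-extendible nodes, obtains $\mathrm{rng}(h)=\mathrm{Nonext}(T)$, and builds a path through $\mathrm{Ext}(T)$ by primitive recursion; only in the complementary case is $\widehat T$ infinite and Lemma~\ref{p-bound-tree} applicable, and there one must first replace the $\Sigma^0_1$ jump condition by the $\Delta^0_1$ condition $\ge_L$ over a finite rational grid $L$, since Lemma~\ref{p-bound-tree} requires the tree to exist as a set, not merely to be $\Pi^0_1$-definable. Your proposal contains no analogue of the first case, so the extraction as written does not go through.
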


\begin{proof}
1 $\Rightarrow$ 2 is   Theorem~\ref{jordanqq},
2 $\Rightarrow$ 3 is a direct consequence of Corollary~\ref{cor:rational-presentation-in-RCA}(ii),
and 3 $\Rightarrow$ 4 is trivial.
We show 4 $\Rightarrow$ 1. We reason within $\RCA$.
% \andre{this still has the weak presentations. please update}
Let $T \subseteq 2^{<\mathbb{N}}$ be an infinite binary tree. 
Assume for a contradiction  that $T$ has no path.
Let $\widetilde{T} = \{\tau \in 2^{<\mathbb{N}} : \tau \not \in T \land \tau \restriction_{(|\tau| - 1)} \in T\}$. Without loss of generality we may assume that $\widetilde{T}$ is infinite.
Consider the  $\Sigma^{0}_{1}$ definable set 
\[\mathrm{Nonext}(T):=\{\tau \in T : \tau \text{ has  only finitely many extensions in } T\}.\]
Then, by \cite[Lemma~II.3.7]{SOSOA}, there exists a one-to-one function $h:\N\to\N$ such that $\rng(h)=\mathrm{Nonext}(T)$.
(Here, we identify a binary string with its usual integer encoding.)
% Let $h:\mathbb{N} \to \N$ be defined by 
%\begin{align*} 
%h(0) & = \min\{\sigma \in T : \forall \tau \in T [\sigma \not\prec \tau]\},
% \\ h(n + 1) & = \min\{\sigma \in T\setminus\{h(k) : k \le n\} : \forall \tau \in T\setminus\{h(k) : k \le n\} [\sigma \not\prec \tau]\},
%\end{align*}
% where the minimum is taken with respect to the usual integer encoding of binary strings. Then \begin{center} $ \rng(h) = \{\tau \in T : \tau \text{ has finitely many extensions in } T\}$.  \end{center} 
%The inclusion from left to right is clear. For the converse inclusion, suppose $\tau$ has finitely many extensions in $T$ and $\tau $ is not in $\rng(h)$.
%Then, by $\Sigma^{0}_{1}$ induction there is a longest $\sigma\in T$ extending $\tau$ such that $\sigma\notin \rng(h)$. Then there is some $n$ such that all the  extensions of $\sigma$ in  $T$ are contained in  $\{h(k) : k \le n\}$. Since there are only finitely many strings shorter than $\sigma$, we have $h(n') = \sigma$ for some $n' \ge n$, which is a contradiction.
%\andre{Is this OK within $\RCA$?}
%\keita{I indicated where we used $\Sigma^{0}_{1}$-induction.}

Let $(\widetilde{\sigma}_k)_{k\in\mathbb{N}}$ be an enumeration of $\widetilde{T}$ such that $|\widetilde{\sigma}_i| \le |\widetilde{\sigma}_{i+1}|$. Note that for any $k, \ell \in \mathbb{N}$,
 \begin{align} \label{line2}
  |\widetilde \sigma_k | \le \ell \rightarrow k \le 2^\ell.
 \end{align}

For all $\sigma \in 2^{<\mathbb{N}}$ put $I_\sigma = [0.\sigma, 0.\sigma + 2^{-|\sigma|}]$. For each $s \in \mathbb{N}$ define a polygonal function $f_s :[0,1] \to \mathbb{R}$ as follows.
 On the interval $I_{\widetilde{\sigma}_k}$ set 
\begin{align}\label{line3}
f_s = \begin{cases} \tup_{I_{\widetilde{\sigma}_k}}(2^{-k}, 2^{k - h(k)})  & \textrm{if $s\ge k> h(k)$,} \\ 0 & \textrm{otherwise.} \end{cases} 
\end{align}
Let $f_s = 0$ elsewhere. Then $(f_s)_{s\in\mathbb{N}}$ defines an effectively uniformly continuous function $f = \lim_{s}f_s$.
By Corollary~\ref{cor:rational-presentation-in-RCA}(ii), one may replace $f$ with a vertical shift and then  assume that $f$ has a rational presentation.
%We  check that $f$ has a rational presentation. For given $p_{s}\in [0,1]_{\Q}$ and $q_{s}\in \Q$, one can effectively find some $\widetilde{\sigma}_k$ such that $p\in I_{\widetilde{\sigma}_k}$ since we have assumed that $T$ has no path (otherwise, the binary expansion of $p_{s}$ is a path of $T$).
%Then, one can easily check whether $f(p_{s})<q_{s}$  since $f$ is just a piecewise linear function on $ I_{\widetilde{\sigma}_k}$.  % Don't write ``or not"!!! (or "Now")....

We show that $f$ is of bounded variation. As in the proof of Theorem~\ref{jordancont},
%\andre{where?}, 
we only need to consider the variation of $f$ on the disjoint intervals $I_{\widetilde{\sigma}_k}$. Let $m \in \mathbb{N}$, and for each $k \le m$ let $\Pi_k$ be a partition of $I_{\widetilde{\sigma}_k}$ containing the midpoints and endpoints of each sawtooth defined on that interval. For all $s \ge m$ one has
\begin{align*}
\sum \limits_{k = 0}^m V(f, \Pi_k) = \sum \limits_{k = 0}^m V(f_s, \Pi_k) \le  \sum \limits_{k = 0}^m 2^{-h(k) + 1}  < 1,
\end{align*}
as required.

By our hypothesis in (5.), there exists a rationally presented non-decreasing function $g :[0,1]_\mathbb{Q} \to \mathbb{R}$
% (we write $Z_{g}$ for its presentation) 
such that $f \le^*_{\mathsf{slope}} g$. Let   $\Delta : \mathbb{N} \to \mathbb{R}$ be the function given  by $\Delta(k) = \max\{g(0.\sigma + 2^{-|\sigma|}) - g(0.\sigma) : \sigma \in T \land |\sigma| = k\}.$ Note that  $\Delta$ is non-increasing. 

There are two cases to consider. If $\lim_{n \to \infty} \Delta(n) = 0$, then $g$ behaves like a continuous function. This provides a decomposition of $f$ that allows us to use an argument similar to the one in Theorem \ref{jordancont} to prove the existence of $\rng(h)$. One can then find a path through $T$ by avoiding this set.

Otherwise, there is a jump-type discontinuity of $g$. The intervals around this point correspond to strings which form an infinite subtree $\widehat T$ of $T$. One can bound the size of any prefix-free subset of $\widehat T$ using the size of this jump, and thus effectively find a path through $\widehat T$.

We now analyse the two cases in detail.

\noindent\textbf{Case 1. } $\lim_{n\to\infty}\Delta(n) = 0$.
Take $\gamma : \mathbb{N} \to \mathbb{N}$ such that $\Delta(\gamma(n)) < 2^{-n}$.
Such $\gamma$ exists by $\Delta^{0}_{1}$ comprehension since ``$\Delta(k)<2^{-n}$'' can be described by a $\Sigma^{0}_{1}$ formula.
% as follows: for any $\sigma\in T\cap 2^{=k}$, there exists $r,s\in\N$ such that $p_{r}=0.\sigma$, $p_{s}=0.\sigma + 2^{-|\sigma|}$, $r\notin Z_{g}$, $s\in Z_{g}$ and $q_{s}-q_{r}<2^{-n}$.
% This can be done using the method in 3 $\Rightarrow$ 1 of the proof of Theorem \ref{jordancont}
% \andre{there is no such direction}.
 If $h(k) = n<k$ then by (\ref{line3}),
 $$g(0.\widetilde{\sigma}_k + 2^{-|\widetilde{\sigma}_k|}) - g(0.\widetilde{\sigma}_k) \ge V(\tup_{I_{\widetilde{\sigma}_k}}(2^{-k}, 2^{k - h(k)}), \Pi_k)= 2^{-h(k)+1}\ge  2^{-n}.$$
  Hence $|\widetilde{\sigma}_k| \le \gamma(n)$, and then by (\ref{line2}) $k \le 2^{\gamma(n)}$. This gives
   $$n \in \rng(h) \leftrightarrow \exists k \le \max\{2^{\gamma(n)},n\}[h(k) = n],$$
    so $\rng(h)=\mathrm{Nonext}(T)$ exists by $\Delta^0_1$ comprehension. 
Thus $\mathrm{Ext}(T)=T \setminus \mathrm{Nonext}(T)$ exists.
Now, one can construct a path of $T$ by an easy primitive recursion: starting from the empty string, one can choose the left most immediate extension of a given string in $\mathrm{Ext}(T)$.
\\

% is an infinite subtree of $T$ since $\Delta$ is non-increasing. 
\noindent\textbf{Case 2. } There exists $M \in \mathbb{N}$ such that $\forall m \exists n [n > m \land \Delta(n) > 2^{-M}]$
Then there are infinitely many strings $\sigma\in T$ such that
$$g(0.\sigma + 2^{-|\sigma|}) - g(0.\sigma) >2^{-M}.$$ 
Without loss of generality, we may take   $K\in\N$ such  that $0\le g(0)<g(1)\le K$.
Let $L=\{i/2^{M+2}: 0\le i\le K2^{M+2}\}$.
Given a   rational presentation  $Z$ of $g$, for  $x,y,z\in \Q$, we write $g(y)-g(x)\ge_{L}z$ if there exist $r,s\in\N$ such that $x=p_{r}$, $y=p_{s}$, $q_{r},q_{s}\in L$, $Z(r)= {1}$ (which implies that  $g(p_{r})<q_{r}$), $Z(s)={0}$ (which implies that  $g(p_{s})>q_{s}$) and $q_{s}-q_{r}\ge z$.
Since $L$ is finite, $g(y)-g(x)\ge_{L}z$ is actually $\Delta^{0}_{1}$ statement as  we only need to check finitely many $r$ and $s$.
Note   that if $g(y)-g(x)>2^{-M}$, then $g(y)-g(x)\ge_{L}2^{-M-2}$ since there are at least two points in $L\cap(g(x),g(y))$.

The tree    $\widehat T\subseteq T$  defined by  $\widehat T=\{\sigma\in T: g(0.\sigma + 2^{-|\sigma|}) - g(0.\sigma) \ge_{L}2^{-M-2}\}$.
  is an infinite subtree of $T$.
(The case assumption guarantees that $\widehat T$ is infinite, and  $\widehat T$ is closed under prefixes because $g$ is non-decreasing.)
We verify  the cardinality of any prefix-free subset of $\widehat T$ is bounded. 
For any prefix-free $P \subseteq \widehat T$, we have
 $$|P|2^{-M-2} \le  \sum \limits_{\sigma \in P} g(0.\sigma + 2^{-|\sigma|}) - g(0.\sigma) \le g(1) - g(0) \le K.$$ 
Thus, $|P|\le K2^{M+2}$.
Hence by Lemma~\ref{p-bound-tree}, $\widehat T$ has a path, and thus $T$ has a path.
\end{proof}

We thank Paul Shafer for providing helpful comments on a previous version of this proof.

%The following two facts may help to get rid of the weak rational presentations.
%The first  is standard.
%\begin{lemma} Let $\seq{y_i} \sN i$ be a computable sequence of reals. One can uniformly obtain  a computable real $y \in [0,1]$ such that $y \neq y_i$ for each $i$. \end{lemma}
%
%\begin{prop} Let $g,h$ be computable functions defined on $[0,1]$. We can uniformly obtain a computable real $y \in [0,1]$ such that for each $p \in \Q\cap[0,1]$, $g(p)+y \not \in \Q$ and $h(p)+y \not \in \Q$.
%\end{prop}
%To see this let $\seq {p_i}$ be a listing of $\Q \cap [0,1]$. Let $\seq {y_i}$ be a  computable sequence of reals consisting  of all the values $p_i- g(p_k)$ and $p_i - h(p_k)$ for $i,k \in \N$. Then $y $ obtained in the foregoing lemma is as desired.
%
%Thus WKL implies that every bounded variation function $f$ has a Jordan decomposition   $f =g-h$ such that $g(p) \not \in \Q$ and $h(p) \not \in \Q$ for each $p \in \Q$.

\section{Martin-L\"of randomness  within $\RCAo$} To study Martin-L\"of random reals within $\RCAo$, we need to define  a notion of uniform measure for open sets.
A  set of binary strings $S\subseteq 2^{<\N}$ is a \emph{code} of an  open set $U\subseteq 2^{\N}$ if  $Z\in U\leftrightarrow \E \sigma\in S \, [Z\succ \sigma]$. We write $[[S]]$ for the open set coded by $S$.
(Note that a $\Sigma^{0}_{1}$-definable set may be used to code an open set, but one can easily find an (existing) set which codes the same open set  by $\Delta^{0}_{1}$-comprehension.)
%  \andre{a set of which kind? This remark is cryptic because you don't mention Sigma 1 in the definition of coding}
Given such a code  $S\subseteq 2^{<\N}$,   let $T_{S}:=\{\sigma\in 2^{<\N}: \A n<|\sigma|(\sigma\rest n\notin S)\}$. Note that $T_{S}$ forms a tree, which we view as a  code of  the complement of $U$.
We first define the measure for a code $S$ of an open set,   and also of its complementary code $T=T_{S}$:
\[\mu(S):=1-\mu(T)=1-\lim_{n\to\infty}\frac{|\{\sigma\in T: |\sigma=n\}|}{2^{n}}.\]
Note that if $S$ is prefix free, then $\mu(S)=\sum_{\sigma\in S}2^{-|\sigma|}$.
The existence of the limit is not  guaranteed within $\RCAo$, but one can still make  assertions such as $\mu(S)\le a$ or $\mu(T_{S})\ge a$, which can be expressed  by  $\Pi^{0}_{1}$-formulas.  

%writing was poor, improved a lot of small things

  $Z\in 2^{\N}$ is said to be Martin-L\"of random relative to $X$ if for any $X$-computable sequence of codes for open sets $\langle S_{n}: n\in\N \rangle$ such that $\mu(S_{n})\le 2^{-n}$, there exists $n\in\N$ such that $Z\notin [[S_{n}]]$.
The assertion ``for any $X$ there exists a Martin-L\"of random real relative to $X$'' is equivalent to $\mr{WWKL}$  by Simpson and Yu \cite{YS1990}.
We always identify a real $z\in [0,1] $ that is not a dyadic rational with its unique  binary expansion viewed as an element of $2^{\N}$.

Besides the fact that the measure of an open set may not exist as a real in the model, there is another problem when developing measure theory within $\RCAo$.
There might exist two codes for open sets $S_{1}$ and $S_{2}$ such that $\A x\in 2^{\N}(x\in [[S_{1}]]\leftrightarrow x\in [[S_{2}]])$ but $\mu(S_{1})\neq \mu(S_{2})$. Thus the value of $\mu$   depends on codes.
We  define the measure for an open set $U\subseteq 2^{\N}$ as
\[\bar{\mu}(U):=\sup\{\mu(S)\mid U=[[S]]\}.\]
This definition   agrees with the   internal measure of  open sets defined  in  \cite[p.\ 174]{YS1990}. 
Within $\WWKLo$, $[[S_{1}]]=[[S_{2}]]$ implies $\mu(S_{1})=\mu(S_{2})$, thus $\mu$ and $\bar \mu$ coincide.
Fortunately, the definition of Martin-L\"of randomness will not be affected even if the two don't coincide. We take any of the two equivalent conditions  below as a definition in the context of $\RCAo$ that $Z$ is not Martin-L\"of random relative to $X$. 
\begin{prop}[$\RCAo$]\label{prop:ML-random-in-RCA}
 The following are equivalent  for $Z,X\in 2^{\N}$.
\begin{enumerate}
 \item There exists an $X$-computable sequence  $\langle S_{i}\mid i\in\N \rangle$  of codes of open sets such that $\mu(S_{i})\le 2^{-i}$ and $Z\in \bigcap_{i\in\N}[[S_{i}]]$.
 \item There exists an $X$-computable sequence $\langle S_{i}\mid i\in\N \rangle$  of codes of open sets  such that $\bar \mu([[S_{i}]])\le 2^{-i}$ and $Z\in \bigcap_{i\in\N}[[S_{i}]]$.
\end{enumerate}
\end{prop}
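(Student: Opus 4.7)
For $(2) \Rightarrow (1)$, the code $S_i$ is itself one of those indexed in the supremum $\bar\mu([[S_i]]) = \sup\{\mu(S'): [[S']]=[[S_i]]\}$, so $\mu(S_i) \le \bar\mu([[S_i]]) \le 2^{-i}$, and the same sequence $\langle S_i \rangle$ witnesses (1).

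For $(1) \Rightarrow (2)$, given $\langle S_i \rangle$ witnessing (1), the plan is to take $S'_i := \widehat{S}_{i+2}$, the prefix-free reduction of $S_{i+2}$ (obtained by keeping $\tau \in S_{i+2}$ only when no proper prefix lies in $S_{i+2}$). This is $\Delta^0_1$-definable in $S_{i+2}$, hence $X$-computable uniformly in $i$; moreover $[[\widehat{S}_{i+2}]] = [[S_{i+2}]] \ni Z$, and the inclusion $T_{\widehat{S}_{i+2}} \supseteq T_{S_{i+2}}$ inherits the $\Pi^0_1$ bound $\mu(S'_i) \le 2^{-i-2}$. The shift of two indices provides a factor of four of geometric slack that the count below requires.

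The substantive step is to verify $\bar\mu([[S'_i]]) \le 2^{-i}$, i.e., that every code $S''$ in the ambient model with $[[S'']] = [[\widehat{S}_{i+2}]]$ satisfies the $\Pi^0_1$ bound $\mu(S'') \le 2^{-i}$. My approach is a level-by-level count. At each level $n$, each $\sigma \in 2^n \setminus T_{S''}$ satisfies $[\sigma] \subseteq [[\widehat{S}_{i+2}]]$ in the model, so every $\Delta^0_1$-definable path extending $\sigma$ has an initial segment in $\widehat{S}_{i+2}$. Using the distinguished path $\sigma 0^{\omega}$, one classifies such a $\sigma$ into three disjoint types according to whether its captured initial segment is a proper prefix of $\sigma$ (so $\sigma \in 2^n \setminus T_{\widehat{S}_{i+2}}$), equals $\sigma$ itself (so $\sigma \in \widehat{S}_{i+2} \cap 2^n$), or is a genuine $0$-extension $\sigma 0^{k}$ with $k \ge 1$ lying in $\widehat{S}_{i+2}$. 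Using the prefix-free weight bound $\mu(\widehat{S}_{i+2}) \le 2^{-i-2}$ and injectivity of $\sigma \mapsto \sigma 0^{k_\sigma}$ into $\widehat{S}_{i+2}$ one controls each class, and the total count $|2^n \setminus T_{S''}|$ comes out to at most $2^{n-i}$, giving $|T_{S''} \cap 2^n|/2^n \ge 1 - 2^{-i}$.

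The main obstacle is the third class, those $\sigma$ covered only by a genuine $0$-extension: without $\WWKL$ one cannot appeal to compactness of $[\sigma]$ to conclude that $\widehat{S}_{i+2}$ uniformly covers $[\sigma]$ and hence accounts for weight roughly $2^{-n}$ per such $\sigma$. I would address this by considering additional $\Delta^0_1$ paths extending $\sigma$ (for instance $\sigma 1 0^{\omega}, \sigma 0 1 0^{\omega}, \ldots$), each forced to contribute a distinct initial segment in $\widehat{S}_{i+2}$, and aggregating the resulting prefix-free weights against the bound $\mu(\widehat{S}_{i+2}) \le 2^{-i-2}$, relying on $\Sigma^0_1$-induction to patch the level-by-level inequalities into the full $\Pi^0_1$ bound. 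The geometric slack from the two-step index shift is precisely what keeps the final count within the target $2^{-i}$ rather than a merely larger constant multiple.
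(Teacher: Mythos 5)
Your direction $(2)\Rightarrow(1)$ is fine and agrees with the paper. The direction $(1)\Rightarrow(2)$ has a structural flaw that no amount of counting can repair: $\bar\mu$ is by definition an invariant of the open set alone, and your $S_i'=\widehat S_{i+2}$ codes exactly the same open set as $S_{i+2}$, so $\bar\mu([[S_i']])=\bar\mu([[S_{i+2}]])$. You are therefore trying to prove that $\mu(S)\le 2^{-i-2}$ for a prefix-free code $S$ implies $\bar\mu([[S]])\le 2^{-i}$, and this is false over $\RCAo$. Concretely, in a model where $\WWKLo$ fails take a tree $T$ with no path and $\mu(T)\ge\varepsilon$; iterating as in Lemma~\ref{lem-as} yields pathless trees $T^n$ with $\mu(T^n)\ge 1-(1-\varepsilon)^n$, and the border set $\widetilde{T^n}$ of minimal non-elements is a prefix-free code with $\mu(\widetilde{T^n})\le(1-\varepsilon)^n$, arbitrarily small, while $[[\widetilde{T^n}]]=2^{\N}$ in the model (every point has a prefix leaving the pathless tree), so $\bar\mu([[\widetilde{T^n}]])=1$. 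This is exactly where your ``third class'' breaks down: the witnesses $\sigma 0^{k_\sigma}\in\widehat S_{i+2}$ (and likewise those extracted from $\sigma 10^{\N}$, $\sigma 010^{\N}$, \dots) may be far longer than $n$, so the weight bound $2^{-i-2}$ controls only $\sum 2^{-n-k_\sigma}$ and places no bound on the \emph{number} of level-$n$ strings in the class. No finite family of $\Delta^0_1$ paths can substitute for the compactness of $[\sigma]$; that compactness is precisely the weak K\"onig-type principle you are not permitted to use, and the index shift buys constant factors where the loss is unbounded.

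The paper's proof succeeds by changing the open set rather than the code: each $\sigma\in S_i$ is replaced by all strings $\sigma^{\frown}0^{k\frown}1$, so the new open set $[[S_i']]$ contains no eventually-zero real. Then every $\sigma\in T_{S_i'}$ has the explicit witness path $\sigma^{\frown}0^{\N}\notin[[S_i']]$, which forces $\sigma\in T_{\widehat S}$ for any code $\widehat S$ with $[[\widehat S]]\subseteq[[S_i']]$; hence $\bar\mu([[S_i']])\le\mu(S_i')\le\mu(S_i)\le 2^{-i}$, with no counting needed. The price is a case distinction: if $Z$ itself is of the form $\sigma^{\frown}0^{\N}$ one uses the trivial tests $S_i'=\{\sigma^{\frown}0^{i}\}$ instead. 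This ``pad so that the complement tree has canonical witness paths'' step is the idea missing from your argument.
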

\begin{proof}
2 $\Rightarrow$ 1 is trivial.
To show 1 $\Rightarrow$ 2, let $\langle S_{i}\mid i\in\N \rangle$ be an $X$-computable sequence of  codes  for open sets such that $\mu(S_{i})\le 2^{-i}$ and $Z\in \bigcap_{i\in\N}[[S_{i}]]$.
If $Z$ is of the form $\sigma^{\frown}0^{\N}$,   put $S'_{i}:=\{\sigma^{\frown}0^{i}\}$. We have $\bar\mu([[S'_{i}]])\le 2^{-i}$ and $Z\in \bigcap_{i\in\N}[[S'_{i}]]$.
Otherwise, put \[S'_{i}:=\{\sigma^{\frown}0^{k}{}^{\frown} {1}: \sigma\in S_{i}, k\in \N\}.\]  Then $T_{S_{i}'}=\{\tau\in 2^{<\N}\mid \tau=\sigma^{\frown}0^{k}$ for some $\sigma\in T_{i}$ and $k\in\N\}$.
Since $T_{S_{i}'}\supseteq T_{S_{i}}$ we have   $\mu(T_{S_{i}'})\le \mu(T_{S_{i}})$. We  still have  $Z\in \bigcap_{i\in\N}[[S_{i}']]$ by the case assumption on  $Z$.
% since $z\not\le_{T}x$ \andre{z nonrational  sufficient}.
On the other hand, for any $\widehat S\subseteq 2^{<\N}$, if  $ \A x\in 2^{\N}(x\in [[\widehat S]]\to x\in [[S_{i}']]) $   then $T_{\widehat S}\supseteq T_{S_{i}'}$ because $\sigma \in T_{S_{i}'}\to \sigma^{\frown}0^{\N}\in [T_{\widehat S}]$.
Thus, $\bar \mu([[S_{i}']])\le \mu(S_{i}')\le 2^{-i}$.
\end{proof}

%better remove this, it only confuses\UTF{00CF}%
%\begin{remark} If $\mr{WWKL}$ fails, then for any $r<1$, there exists a code for open set $S$ with $[T_{S}]=\emptyset$ such that $\mu(T_{S})\ge r$:
%Take a  tree of positive measure with no path. Now  a well known result of Ku$\check{\text{c}}$era on tails of random sets; see e.g.\ \cite[Proposition 3.2.24]{nies2009computability} for that result.
% \end{remark}

%
%\begin{lem}[essentially due to Yu/Simpson \cite{YS1990}]
%The following are equivalent over $\RCAo$.
%\begin{enumerate}
% \item $\WWKLo$.
% \item For any set $X$ and for any family of open intervals $\mathcal{U} = \{(u_i, v_i)\}_{i\in\mathbb{N}}$ such that $ \sum_{i = 0}^\infty (v_i - u_i) < 1$, there exists a real $z\in [0,1]\setminus \bigcup_{i=0}^{\infty} (u_{i},v_{i})$ which is Martin-L\"of random relative to $\mathcal{U}\oplus X$.
% \item For any family of open intervals $\mathcal{U} = \{(u_i, v_i)\}_{i\in\mathbb{N}}$, if $\mathcal{U}$ covers $[0,1]$, then $ \sum_{i = 0}^\infty (v_i - u_i) \ge 1.$
%\end{enumerate}
%\end{lem}
%\begin{proof}
% ...
%\end{proof}

\section{Differentiability of functions of bounded variation in $\WWKL$} \label{diffbv}
Lebesgue's theorem states that   functions on $[0,1]$ of bounded variation  are a.e.\ differentiable. The main result of this section, Theorem~\ref{Thm: Leb},  shows that several  versions of  this result are equivalent to $\WWKL$ over $\RCAo$.
 For a function $f$ and distinct reals $a,b $  in the domain of $f$, we denote the slope by  $$S_f(a, b)= \frac{f(b)- f(a)}{b-a}.$$

%\textbf{Pseudo-differentiability.} 

\begin{definition}[Section 2.3 of \cite{BMN2016}, going back to Demuth, within $\RCAo$] Let $f : \subseteq [0,1] \to \mathbb{R}$ be a function with domain containing $[0,1]_\mathbb{Q}$ ($f$ may be a continuous function or a rationally presented function).
For a given $h>0$,  the $h$-derivative of $f$ at $x\in [0,1]$ is the set of reals defined by
\[ D_{h} f(x)  = \{S_f(a, b) : a,b \in [0,1]_\mathbb{Q} \ \land \ a \le x \le b \ \land \ 0 < b - a < h \}.\]
The function $f$ is \emph{pseudo-differentiable} at $z \in (0,1)$ if 
$\lim_{h \to 0^+} \mathrm{diam}(D_{h} f(z))=0$,
 or more formally, for any $\varepsilon>0$, there exist $c,d\in \Q$ and $h>0$ such that $d-c<\varepsilon$ and $D_{h} f(z)\subseteq[c,d]$.
\end{definition}
The \emph{upper} and \emph{lower pseudo-derivatives of $f$} are defined by 
$\wtilde D f(x) = \lim_{h \to 0^+} \sup D_{h}f(x)$ and
$\D f(x) = \lim_{h \to 0^+} \inf D_{h}f(x)$.
%\begin{align*} 
%\wtilde D f(x) & = \lim \limits_{h \to 0^+} \sup\{S_f(a, b) : a,b \in [0,1]_\mathbb{Q} \ \land \ a \le x \le b \ \land \ 0 < b - a < h \}, \text{ and} \\
%\D f(x) & = \lim \limits_{h \to 0^+} \inf \{S_f(a,b) : a,b \in [0,1]_\mathbb{Q} \ \land a \le x \le b \ \land \ 0 < b - a < h \}.
%\end{align*}
%To say that a  function $f$ is pseudo-differentiable at $z \in (0,1)$ means that $\D f(z)$ and $\wtilde Df(z)$ are both finite and equal (in a comparative sense).
Then, the assertion $\lim_{h \to 0^+} \mathrm{diam}(D_{h} f(z))=0$ formally means that $\D f(z)=\wtilde Df(z)$ without referring to the limits themselves.
The point is that we don't have to require  that $f(z)$ be defined; for instance we could be interested in a function  $f$ only defined on rationals. In this way we can include in our equivalences with $\WWKL$ in Theorem~\ref{Thm: Leb} a statement about functions with rational presentations.  It follows from  \cite[Lemma 2.5]{BMN2016} that if $f$ is defined and  continuous at $z$, then the pseudo-derivative at $z$ exists iff the usual derivative exists, and they agree. 
 
Note that the real $r=\wtilde D f(x)=\D f(x)$ may fail to exist in a model of $\RCAo$ even if $\D f(z)$ and $\wtilde Df(z)$ are equal. We will avoid mentioning the values $\wtilde D f(x)$ or $\D f(x)$ and just consider inequality,  as we already did  in  the case for bounded variation.

\subsection{Pseudo-differentiability of non-decreasing functions within  $\mathsf{RCA_0}$}

\begin{lemma}[a version of part of \text{\cite[Theorem~4.3]{BMN2016} that works within  $\mathsf{RCA_0}$}]\label{lem1}
Let $f:[0,1]_{\mathbb{Q}}\to \mathbb{R}$ be a rationally presented non-decreasing function, and let $z\in [0,1]$ be Martin-L\"of random relative to $f$.
Then  $f$ is pseudo-differentiable at $z$.
\end{lemma}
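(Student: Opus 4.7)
The plan is contrapositive: assuming $f$ is not pseudo-differentiable at $z$, construct an $f$-computable sequence $(S_n)_{n\in\N}$ of codes of open sets with $\bar\mu([[S_n]]) \le 2^{-n}$ such that $z \in \bigcap_n [[S_n]]$, which contradicts $f$-relative Martin-L\"of randomness of $z$ by Proposition~\ref{prop:ML-random-in-RCA}. The failure of pseudo-differentiability at $z$ splits into two cases: (a)~$\wtilde D f(z) = +\infty$; or (b)~there exist rationals $p < q$ with $\D f(z) < p$ and $\wtilde D f(z) > q$ (note $\D f(z) \ge 0$ since $f$ is nondecreasing). I would produce a test for each rational parameter, then amalgamate by a diagonal sum over the countably many rational parameters.

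For each rational $\alpha > 0$ and $n \in \N$, let $U^\alpha_n$ denote the union of all rational open intervals $(a,b) \subseteq [0,1]$ with $b - a < 2^{-n}$ and $S_f(a,b) > \alpha$. Using the rational presentation of $f$, the inequality $S_f(a,b) > \alpha$ unfolds into $\Sigma^0_1(f)$ existential queries, so a $\Delta^0_1(f)$ code $S^\alpha_n$ for $U^\alpha_n$ exists in $\RCAo$. Similarly define $V^p_n$ as the analogous open set with slope $< p$, and let $W^{p,q}_n$ code $U^q_n \cap V^p_n$. If $\wtilde D f(z) = +\infty$, then $z \in [[S^\alpha_n]]$ for every $\alpha$ and every $n$; if $\D f(z) < p < q < \wtilde D f(z)$ then $z \in [[W^{p,q}_n]]$ for every $n$.

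The key estimate is a Vitali-type bound. Since $f$ is nondecreasing, a maximal pairwise disjoint subfamily $\{(a_i,b_i)\}$ of slope-$>\alpha$ intervals satisfies $\alpha \sum (b_i - a_i) \le \sum (f(b_i) - f(a_i)) \le f(1) - f(0)$, and threefold enlargement covers the original family; hence $\bar\mu([[S^\alpha_n]]) \le 3(f(1)-f(0))/\alpha$ uniformly in $n$. Taking $\alpha = \alpha_k$ growing exponentially in $k$ (which is effective since a rational upper bound on $f(1) - f(0)$ is available from the rational presentation) produces a test for case~(a). For case~(b), iterate $k$ times: inside an open cover of the bad set, Vitali-select disjoint slope-$<p$ subintervals, and within each such parent cover the bad set by slope-$>q$ subintervals whose total length is at most $p/q$ times the parent length. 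The $k$-fold iteration produces a cover of measure at most $C(p/q)^k$, and since $p/q < 1$ this yields the geometric decay required for the test.

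The main obstacle is implementing the Vitali selection within $\RCAo$, since classical proofs invoke compactness or an unbounded greedy procedure. The workaround is to perform only \emph{finite} greedy selections: for each $k$, extract a maximal disjoint subfamily among the first $k$ witnessing intervals by $\Sigma^0_1$-induction, which is available in $\RCAo$. Summing the $\Pi^0_1$ inequalities $f(b_i) - f(a_i) \ge \alpha(b_i - a_i)$ over this finite subfamily yields the length bound; tripling dominates the first $k$ witnessing intervals, so the measure of every finite subcover of $U^\alpha_n$ is bounded by $3(f(1)-f(0))/\alpha$, a $\Pi^0_1$ assertion provable in $\RCAo$, which transfers to the bound on $\bar\mu$ through the finite-initial-segment definition of the measure. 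The iteration for case~(b) is then a primitive recursion on $k$ over the same finite-selection step, staying within $\RCAo$. Once the tests are assembled and amalgamated, $f$-randomness of $z$ forces $z$ to escape each, contradicting either failure mode; hence $f$ is pseudo-differentiable at $z$.
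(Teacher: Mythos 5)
Your Case (a) is sound, and it takes a mildly different route from the paper's Case 1: the paper restricts to dyadic intervals, where same-length intervals are automatically disjoint, so the bound $\mu(T_m)\ge 1-2^{-m}$ follows directly from $f(1)-f(0)\le 1$ with no covering lemma at all. The genuine gap is in your Case (b), in the claim that the $k$-fold iteration ``produces a cover of measure at most $C(p/q)^k$''. At each level your finite greedy selection yields a \emph{disjoint} subfamily, for which the monotonicity estimate holds, but only the \emph{threefold enlargements} of the selected intervals cover the relevant points; in particular $z$ need not lie in any selected interval, only in some enlargement. To keep $z$ inside the level-$(k+1)$ set you must therefore recurse into the enlargements, which carry no slope information and cost a factor $3$ in total length at every level. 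The resulting bound has the shape $(3p/q)^k$ rather than $(p/q)^k$, and since the failure of pseudo-differentiability only supplies \emph{some} rationals $p<q$ with $\D f(z)<p<q<\wtilde D f(z)$ --- the ratio $p/q$ may be arbitrarily close to $1$ --- this need not decay. The classical way to remove the factor $3$ is the full Vitali covering theorem (a disjoint selection covering up to measure zero), but that is not obtainable by finite greedy selections, and a formalized Vitali covering theorem is itself equivalent to $\WWKL$ over $\RCAo$ (Brown--Giusto--Simpson), so it cannot be assumed here.

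The paper's proof avoids this obstacle by a different device: following Brattka--Miller--Nies it replaces arbitrary rational intervals by intervals drawn from finitely many arithmetic grids (the $(p,q)$-intervals), for which any two intervals of the same grid are either disjoint or nested. Disjointification is then free, and the alternation estimate gives exactly $(\gamma/\beta)^n$ with no enlargement loss. This creates a second obstacle that your proposal never reaches: fixing a single pair of grids valid for all depths would require the infinite pigeonhole principle ($\BII$), which is unavailable in $\RCAo$, and the paper's $n$-depth alternation sequences combined with the \emph{finite} pigeonhole principle over $L^{2}$ are precisely the workaround. You would also need the preliminary step, present in the paper's Case 2, that $\D f(z)<\infty$ together with monotonicity yields continuity of $f$ at $z$ via nested interval completeness; this is what licenses relating pseudo-derivatives to grid-interval slopes in the first place.
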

\begin{proof}
 %, and take $K\in\N$ so that $f(1)-f(0)<K$.
Without loss of generality, we may suppose  that $0\le f(0)\le f(1)\le 1$.
Assume that $f$ is not pseudo-differentiable at $z$.
If $z$ is rational, then $z$ is not Martin-L\"of random, so assume that $z$ is irrational.
We will consider the following two cases.

\smallskip
\noindent
{\it Case 1.} $\D f(z)=\infty$.
Thus, for any $m\in\N$, there exists $a_{0}<z<b_{0}$ such that for any $a,b\in\Q$, $a_{0}<a<z<b<b_{0}\to S_{f}(a,b)>2^{m}$.
For a given $\sigma\in2^{<\N}$, we let $l_{\sigma}=0.\sigma$ and $r_{\sigma}=0.\sigma+2^{-|\sigma|}$.
Let $\varphi(m,\sigma)$ be a $\Pi^{0}_{1}$-formula saying that $S_{f}(l_{\sigma},r_{\sigma})\le 2^{m}$.
Write $\varphi(m,\sigma)\equiv\A s \theta(s,m,\sigma)$ for a $\Sigma^{0}_{0}$-formula $\theta$, and put
\[T_{m}:=\{\sigma\in 2^{<\N}: \A \tau\preceq \sigma \A s<|\sigma|\theta(s,m,\tau)\}.\]
Since $f(1)-f(0)\le 1$, for each $m\in\N$ and $k\in\N$, there are at most $2^{k}$ strings of  length $m+k$ which are not in $T_{m}$.
Thus, $\mu(T_{m})\ge 1-2^{m}$, and hence $\langle 2^{\N}\setminus[T_{m}]: m\in\N \rangle$ forms a Martin-L\"of test.
By the assumption, $z\notin[T_{m}]$ for any $m\in\N$. Thus $z$ is not Martin-L\"of random relative to $f$.

\smallskip
\noindent
{\it Case 2.} $\D f(z)<\infty$ and $\D f(z)<\widetilde{D} f(z)$.
We will follow the proof of (iii) $\to$ (ii) of \cite[Theorem 4.3]{BMN2016} halfway within $\RCAo$.

Since $f$ is non-decreasing and $\D f(z)<\infty$,    the limit $\lim_{y \to z} f(y)$ exists   by   nested interval completeness  \cite[Theorem II.4.8]{SOSOA} which holds in $\RCAo$. So we may assume that $f(z)$ exists and $f$ is continuous at $z$; this 
will be  needed  in the following argument when we apply a  version of  \cite[Lemma 2.5]{BMN2016} formalised within $\RCAo$.

For given $p,q\in \Q$, an interval $A$ is said to be a \emph{$(p,q)$-interval} if it is of the form $A=(pi2^{-n}+q,p(i+1)2^{-n}+q)$ for some $n\in \N$ and $i\in \Z$.
For a finite set $L\subseteq\Q^{2}$, an interval is said to be \emph{$L$-interval} if it is a $(p,q)$-interval for some $(p,q)\in L$.
One can formalise within $\RCAo$ the proofs of Lemma 2.5, Lemma 4.1 and most of the proof of Lemma 4.4 of \cite{BMN2016}.  To see this, note that these arguments only rely on elementary arithmetic, which can be formalised within $\RCAo$.  Hence we have the following.

\begin{claim}\label{claim:algebraic-argument}
There exist rationals $\beta>\gamma>0$ and a finite set $L\subseteq\Q^{2}$ such that
\begin{align*}
 \gamma &> \lim_{h\to 0} \inf \{S_{f}(A): A\mbox{ is an $L$-interval}\wedge |A|\le h\wedge z\in A\},\\
 \beta &< \lim_{h\to 0} \sup \{S_{f}(A): A\mbox{ is an $L$-interval}\wedge |A|\le h\wedge z\in A\}.
\end{align*}
\end{claim}
In the final step of the proof of  \cite[Lemma 4.4]{BMN2016} for both inequalities there, one picks $(p,q)$ and $(r,s)$ from $L$ which by themselves witness   the two  inequalities above, respectively; that is, we only need to look at $(p,q)$ intervals for the first, and at $(r,s)$-intervals for the second.
However, this is impossible within $\RCAo$ since it requires an essential use of the infinite pigeonhole principle (also known as $\mathrm{RT}^{1}$) which is equivalent to $\BII$.
Thus, we need to take  a detour around this part of the proof.

We fix $\beta,\gamma$ and $L\subseteq\Q^{2}$ as in  Claim~\ref{claim:algebraic-argument}.
An \emph{$n$-depth alternation $L$-sequence} is a length $2n+1$ decreasing sequence of $L$-intervals $[0,1]\supseteq A_{0}\supseteq A_{1}\supseteq\dots\supseteq A_{2n}$ such that $S_{f}(A_{2i})<\gamma$ for any $i\le n$, $S_{f}(A_{2i+1})>\beta$ for any $i<n$.
For $(p,q),(r,s)\in L$, an \emph{$n$-depth alternation $(p,q)$;$(r,s)$-sequence} is an $n$-depth alternation $L$-sequence such that $A_{2i}$ is a $(p,q)$-interval for any $i\le n$ and $A_{2i+1}$ is an $(r,s)$-interval for any $i<n$.
The last interval of an $n$-depth alternation $(p,q)$;$(r,s)$-sequence is called an  \emph{$n$-depth $(p,q)$;$(r,s)$-interval}.
By Claim~\ref{claim:algebraic-argument}, for any $n\in\N$, there exists an $n$-depth alternation $L$-sequence such that $z\in A_{2n}$.
Furthermore, by the finite pigeon hole principle, every $n|L|^{2}$-depth alternation $L$-sequence contains an  $n$-depth alternation $(p,q)$;$(r,s)$-subsequence for some $(p,q),(r,s)\in L$.
Thus, we have the following claim.
\begin{claim}\label{claim:n-depth-interval}
For any $n\in\N$ there exist $(p,q),(r,s)\in L$ and an $n$-depth $(p,q)$;$(r,s)$-interval $A$ such that $z\in A$.
\end{claim}
\noindent
Note that we cannot fix $(p,q),(r,s)\in L$ for all $n\in\N$ in this claim since it would  require $\BII$ again.

Next, we will calculate the size of $n$-depth $(p,q)$;$(r,s)$-intervals.
Fix $(p,q),(r,s)\in L$ and let $\{A^{s}\}_{s<u}$ be a finite collection of $n$-depth $(p,q)$;$(r,s)$-intervals.
Take an $n$-depth alternation $(p,q)$;$(r,s)$-sequence $A^{s}_{0}\supseteq\dots\supseteq A^{s}_{2n}=A^{s}$ for each $s<u$, and let $\bar{A}_{i}=\bigcup_{s<u}A^{s}_{i}$.
For a finite union of intervals $\bar{A}$ which is described by a finite disjoint union as $\bar{A}=\bigsqcup_{j<l}[a_{j},b_{j}]$, put $|\bar{A}|:=\sum_{j<l}(b_{j}-a_{j})$ and $\Delta_{f}(\bar{A}):=\sum_{j<l}(f(b_{j})-f(a_{j}))$.
Since any two $(p,q)$-intervals (or $(r,s)$-intervals) are disjoint, or one includes the other, we have that $\Delta_{f}(\bar{A}_{2i})/|\bar{A}_{2i}|<\gamma$ for any $i\le n$ and $\Delta_{f}(\bar{A}_{2i+1})/|\bar{A}_{2i+1}|>\beta$ for any $i<n$.
Thus, for any $i<n$,
\begin{align*}
 |\bar{A}_{2i+2}|\le |\bar{A}_{2i+1}| <\frac{\Delta_{f}(\bar{A}_{2i+1})}{\beta}\le \frac{\Delta_{f}(\bar{A}_{2i})}{\beta}<\frac{\gamma}{\beta}|\bar{A}_{2i}|.
\end{align*}
Hence,
\begin{align}
\bar \mu\left(\bigcup_{s<u} A^{s}\right)=|\bar{A}_{2n}|<\left(\frac{\gamma}{\beta}\right)^{n}|\bar{A}_{0}|\le \left(\frac{\gamma}{\beta}\right)^{n}.\label{eq:size-of-n-depth} 
\end{align}

Now, put
\begin{align*}
U_{n}^{(p,q);(r,s)}&:=\bigcup\{A: A\mbox{ is an $n$-depth $(p,q)$;$(r,s)$-interval}\},\\
U_{n}&:=\bigcup_{(p,q),(r,s)\in L}U_{n}^{(p,q);(r,s)}.
\end{align*}
Note that one can enumerate all $n$-depth $(p,q)$;$(r,s)$-intervals $f$-computably.
By $(\ref{eq:size-of-n-depth})$, $\bar \mu(U_{n}^{(p,q);(r,s)})\le (\gamma/\beta)^{n}$.
Thus, $\bar \mu(U_{n})\le (\gamma/\beta)^{n}|L|^{2}$.
By Claim~\ref{claim:n-depth-interval}, $z\in \bigcap_{n\in\N}U_{n}$. Thus $z$ is not Martin-L\"of random relative to $f$.
\end{proof}
\begin{remark}\label{rem:non-decreasing-differentiability-in-RCA}
By a careful formalization of the notion of   test for computable randomness within $\RCAo$, one can reformulate the above proof to obtain the following assertion within $\RCAo$: {for any rationally presented non-decreasing function $f:[0,1]_{\mathbb{Q}}\to \mathbb{R}$, there exists a computable test relative to $f$ such that $f$ is pseudo-differentiable at $z\in [0,1]$ if $z$ passes the test.}
On the other hand, one can easily see within $\RCAo$ that for any computable test relative to $X$, there exists a real computable from $X$ which can pass it.
Thus, within $\RCAo$,  every rationally presented non-decreasing function is pseudo-differentiable at some point.
%\keita{I feel this is ok, but I haven't carefully verified this yet.}
However,  as we will see in Theorem~\ref{Thm: Leb}, this   does not  imply that every rational presented non-decreasing function is pseudo-differentiable almost surely (as defined below).  
\end{remark}

\subsection{A.e. pseudo-differentiability of functions of bounded variation} 
We introduce a  notion of a.e.~differentiability in a form that is appropriate within $\RCAo$.
In that setting, any open set $U\subseteq[0,1]$ can be identified with an open set in $2^{\N}$ via the canonical surjection $\pi:2^{\N}\to [0,1]$ defined by $\pi(Z)=\sum_{n\in Z}2^{-n}$.  
We define the measure for open sets in $[0,1]$ by $\bar\mu(U)=\bar\mu(\pi^{-1}(U))$.
This $\bar\mu$ coincides with the Lebesgue measure on $[0,1]$ as  in \cite[p.\ 174]{YS1990}.
% There is a weaker  version, pseudo-differentiable almost surely, and a stronger  version, pseudo-differentiable almost definitely. They coincide within $\WWKLo$, but may not within $\RCAo$. Unlike the case of almost sureness,   within $\RCAo$ a property that holds almost definitely is satisfied by some real. \andre{this right?}
\begin{definition}\label{defi:measure-on-real-line} A function $f : \subseteq [0,1] \to \mathbb{R}$ with domain containing $[0,1]_\mathbb{Q}$  is pseudo-differentiable \emph{almost surely} if $\bar \mu(U)= 1$ for any open set $U\subseteq [0,1]$ containing every  point of  pseudo-differentiability of $f$. 
%
%  $f$ is pseudo-differentiable \emph{almost definitely} if for any family of open intervals $\mathcal{U} = \{(u_i, v_i)\}_{i\in\mathbb{N}}$, if $\bigcup\mathcal{U}$ contains every pseudo-differentiable point of $f$, then $ \sum_{i = 0}^\infty (v_i - u_i) \ge 1.$  
\end{definition}

For the  main results of this section we need two preliminaries.
The first one is a model-theoretic generalization of the fact that any Martin-L{\"o}f random real is Martin-L{\"o}f random relative to some $\PA$-degree by Downey, Hirschfeldt, Miller and Nies \cite[Proposition~7.4]{Downey-et-el-2005} and Reimann and Slaman \cite[Lemma~4.5]{Reimann-Slaman-2015}.
\begin{lemma}[Simpson/Yokoyama \cite{SY2011}] \label{lemma2}
For any countable model $(\M, \S) \models \WWKL$ there is $\widehat \S \supseteq \S$ satisfying 
\begin{enumerate}\itemsep0em
\item $(\M, \widehat \S) \models \WKL$, and 
\item for any $A \in \widehat \S$ there is $Z \in \S$ such that $Z$ is Martin-L{\"o}f random relative to $A$.
\end{enumerate}
\end{lemma}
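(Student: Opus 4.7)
The plan is to construct $\widehat\S$ as the union of an increasing countable chain $\S = \widehat\S_0 \subseteq \widehat\S_1 \subseteq \cdots$ of Turing-closed subsets of the second-order part, maintaining throughout the invariant

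(INV) for every $A \in \widehat\S_n$ there is $Z \in \S$ that is Martin-L\"of random relative to $A$.

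The invariant holds at stage $0$ because $(\M,\S)\models \WWKL$ delivers, for every $B \in \S$, a ML-random $Z \in \S$ relative to $B$. At stage $n+1$ I pick, via a standard bookkeeping enumeration of all (eventual) infinite binary trees appearing in any $\widehat\S_m$, a tree $T \in \widehat\S_n$ not yet handled and adjoin a path $P$ through $T$; let $\widehat\S_{n+1}$ be the closure of $\widehat\S_n \cup \{P\}$ under Turing reducibility and joins. Once every infinite binary tree that ever appears in $\widehat\S$ is so processed, the limit $\widehat\S = \bigcup_n \widehat\S_n$ gives $(\M,\widehat\S) \models \WKL$.

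The heart of the proof is the following preservation lemma, which is a model-theoretic relativisation of Downey--Hirschfeldt--Miller--Nies \cite[Proposition~7.4]{Downey-et-el-2005} and Reimann--Slaman \cite[Lemma~4.5]{Reimann-Slaman-2015}: given $B \in \widehat\S_n$, a ML-random $Z \in \S$ relative to $B$, and an infinite $B$-computable tree $T$, there is a path $P \in [T]$ (or, more flexibly, a set $A$ that is PA-complete relative to $B$ and hence computes a path through $T$) such that $Z$ remains ML-random relative to $B \oplus P$. There are two complementary tools I would use. First, a relativised van Lambalgen inside $\WWKL$ reduces ``$Z$ is ML-random rel $B \oplus P$'' to ``$P$ is ML-random rel $B \oplus Z$''. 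Second, the set of $A \in [T_{\PA}^B]$ along which $Z$ loses its randomness can be covered by a Martin-L\"of test relative to $B$, so by the assumed randomness of $Z$ the good set is nonempty; this is the measure-theoretic core of \cite{Downey-et-el-2005,Reimann-Slaman-2015}. Since $Z$ itself serves as the witness for every $A' \le_T B \oplus P$ in $\widehat\S_{n+1}$, the invariant is maintained.

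I expect the main obstacle to lie in two places. The first is the formalisation of the DHMN/Reimann-Slaman measure argument within $\RCAo$ using the $\bar\mu$-measure developed earlier in the paper; care is needed because $\M$ may be nonstandard, so one cannot freely invoke external countable additivity and one must replace limit statements by uniformly effective bounds, similar to the $\Pi^0_1$ phrasing of ``$\bar\mu([[S]]) \le a$'' in Section~5. The second obstacle is handling the countably many $B \in \widehat\S_n$ simultaneously with a single $P$: I would dovetail by iteratively refining $T$ to a nested sequence of infinite $B_k$-computable subtrees, each cut down by removing the bad completions for $(B_k,Z_{B_k})$, and extract $P$ in their intersection. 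Combined with Henkin-style bookkeeping that revisits every tree and every element of $\widehat\S$ infinitely often, this yields both items~(1) and (2) in the limit.
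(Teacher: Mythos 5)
The paper does not actually prove this lemma: it is quoted verbatim from Simpson--Yokoyama \cite{SY2011}, so there is no in-paper proof to compare against, and what you are doing is reconstructing the cited argument. Your overall architecture is the right one and matches that argument: adjoin paths one tree at a time, maintain the invariant that every set in the extension has a Martin-L\"of random witness in $\S$, and drive each step by a relativized Downey--Hirschfeldt--Miller--Nies / Reimann--Slaman randomness-preservation basis theorem.

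Within the step lemma and the fusion, however, there are genuine gaps. First, the van Lambalgen ``tool'' does not apply: for an arbitrary infinite tree $T$ the class of paths $[T]$ is typically null (it may be a singleton), so no path $P$ is ML-random relative to $B\oplus Z$ and the reduction of ``$Z$ random relative to $B\oplus P$'' to ``$P$ random relative to $B\oplus Z$'' is useless; only the measure-theoretic argument works, namely that for a nonempty $\Pi^0_1(D)$ class $Q$ and $Z$ random relative to $C\oplus D$, the set $\{Y: \forall X\in Q\ (Y\in U_n^{C\oplus X})\}$ is covered by a $\Sigma^0_1(C\oplus D)$ set of measure at most $2^{-n}$, whence some $X\in Q$ satisfies $Z\notin U_n^{C\oplus X}$. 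This forces you to choose the witness for $B_k$ \emph{at stage $k$}, random relative to $B_k$ joined with a code for the current subclass $Q_k$ (which involves all previously treated pairs and previously adjoined paths); your proposal fixes $Z_{B_k}$ in advance, and then $Z_{B_k}$ need not be random relative to the parameters defining $Q_k$, so the covering argument fails. The invariant is exactly what licenses the re-choice. Second, ``extract $P$ in their intersection'' is unjustified as stated: a nested sequence of infinite subtrees can have a finite intersection as a tree. One must either note that the path sets $[Q_0]\supseteq[Q_1]\supseteq\cdots$ are nonempty closed subsets of the compact space $2^{\M}$, so their intersection is nonempty (an external, metatheoretic argument, which is legitimate here), or run a genuine fusion committing to longer and longer initial segments of $P$. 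Third, since $\M$ may be nonstandard, one must ensure the adjoined paths preserve $\Sigma^0_1$ induction --- an arbitrary path through the full binary tree can enumerate a cofinal $\omega$-sequence and destroy it --- which requires interleaving Harrington-style genericity conditions into the fusion; your proposal does not address this, and without it $(\M,\widehat\S)\models\WKL$ does not follow.
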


The following is related to a well known result of   Ku$\check{\text{c}}$era; also  see   \cite[Proposition 3.2.24]{nies2009computability}. We say that $W$ is a tail of a set  $Z \subseteq \N$ if there is $n$ such that $W(i) = Z(n+i)$ for each $i$. 
\begin{lem}[$\RCAo$]\label{lem-as}
%  Let  $\mathcal{U} = \{(u_i, v_i)\}_{i\in\mathbb{N}}$  be a family of open intervals such that $ \sum_{i = 0}^\infty (v_i - u_i) < 1$. 
Let $U\subseteq[0,1]$ be an open set such that $\bar \mu(U)<1$, and let $S\subseteq 2^{<\N}$ be a code for an open set such that $[[S]]=\pi^{-1}(U)$.
Let  $Z\in 2^{\N}$ be  Martin-L\"of random relative to $S$. There exists a tail $W$ of $Z$ such that  $0.W=\pi(Z)\in [0,1]\setminus U$.
%Let  $Z\in 2^{\N}$ be  Martin-L\"of random relative to $\mathcal{U}$. There exists a tail $W$ of $Z$ such that  $0.W\in [0,1]\setminus \bigcup_{i=0}^{\infty} (u_{i},v_{i})$.
\end{lem}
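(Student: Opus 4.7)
\medskip\noindent\emph{Proof proposal.} The plan is to establish the contrapositive: assuming every tail of $Z$ lies in $[[S]]$, I will construct an $S$-computable Martin-L\"of test (in the sense of Proposition~\ref{prop:ML-random-in-RCA}(1)) that covers $Z$, contradicting the randomness hypothesis. The argument adapts Kucera's classical proof to the $\RCAo$ setting, where the natural test consists of iterated concatenations of $S$-strings.

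First, I will use $\bar\mu(U)<1$ to fix a rational $c<1$ with $\mu(S)\le c$ (such $c$ exists because $S$ is one of the codes over which $\bar\mu(\pi^{-1}(U))$ is the supremum). After replacing $S$ by its prefix-minimal subset $S^{\min}=\{\sigma\in S:\A k<|\sigma|\,\sigma\rest k\notin S\}$---a $\Delta^0_1$-in-$S$ operation that codes the same open set and, since $T_{S^{\min}}\supseteq T_{S}$, can only decrease the code measure---I may assume $S$ is prefix-free. Next I will define the concatenation languages $S^n=\{\sigma_1\sigma_2\cdots\sigma_n:\sigma_i\in S\}$; uniformly in $n$, each $S^n$ is $\Delta^0_1$-in-$S$ (membership of $w$ is decidable by bounded search over the possible decompositions) and prefix-free.

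The heart of the argument will be the $\Pi^0_1$ measure bound $\mu(S^n)\le c^n$. By prefix-freeness of $S^n$, every $\tau\in 2^L$ with a proper prefix in $S^n$ extends exactly one $w\in S^n$ with $|w|\le L$, so $|T_{S^n}\cap 2^L|/2^L = 1-\sum_{w\in S^n,\,|w|\le L}2^{-|w|}$. Using the unique decomposition $w=\sigma_1\cdots\sigma_n$ and $2^{-|w|}=\prod_i 2^{-|\sigma_i|}$, each finite sum on the right factors as a product of $n$ finite partial sums over $S$, each bounded by $\mu(S)\le c$, yielding the bound $c^n$. Choosing $n_k$ primitive-recursively with $c^{n_k}\le 2^{-k}$ and setting $S_k:=S^{n_k}$ then produces the desired $S$-computable sequence with $\mu(S_k)\le 2^{-k}$.

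Finally, $Z\in\bigcap_k[[S_k]]$ will follow from a $\Sigma^0_1$-induction on $n$ showing $Z\in[[S^n]]$ for every $n$: given a prefix $\sigma_1\cdots\sigma_n\in S^n$ of $Z$, the tail of $Z$ at position $|\sigma_1\cdots\sigma_n|$ lies in $[[S]]$ by hypothesis, so some $\sigma_{n+1}\in S$ is a prefix of that tail, giving $\sigma_1\cdots\sigma_{n+1}\prec Z$ in $S^{n+1}$. This contradicts Martin-L\"of randomness of $Z$ relative to $S$, forcing some tail $W$ of $Z$ to satisfy $W\notin[[S]]$, i.e.\ $\pi(W)\in[0,1]\setminus U$. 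The principal obstacle I anticipate is the clean formulation of the measure bound $\mu(S^n)\le c^n$ in $\RCAo$---since $\mu(S)$ itself need not exist as a real in the model---but the $\Pi^0_1$ statement via finite partial sums keeps everything arithmetical, with no appeal to $\WWKLo$, countable additivity, or ergodic-theoretic machinery.
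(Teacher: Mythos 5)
Your proposal is correct and is essentially the paper's own argument: both are Kučera-style proofs that pass to the prefix-free generating set of $S$ (your $S^{\min}$ is exactly the paper's $\widetilde T$), form its $n$-fold concatenations, establish the geometric bound $\mu(S^n)\le c^n$ via finite partial sums, and use $\Sigma^0_1$-induction to locate the relevant tail. The only difference is presentational — the paper runs the argument directly (getting $Z\in[T^{ln}]$ and then taking the maximal decomposition point to exhibit $W$), while you argue by contrapositive — which changes nothing of substance.
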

\begin{proof}
%Choose  $q\in \Q$ such that $ \sum_{i = 0}^\infty (v_i - u_i) < q<1$.
%Define a tree $T\subseteq 2^{<\N}$ by
%\[T=\{\sigma\in 2^{<\N}: [0.\sigma,0.\sigma+2^{-|\sigma|}]\not\subseteq \bigcup_{i<|\sigma|}(u_{i},v_{i})\}.\]
%If $W\in[T]$, then $0.W\in [0,1]\setminus \bigcup_{i=0}^{\infty} (u_{i},v_{i})$, since otherwise, there exists some $i\in\N$ such that $0.W\in (u_{i},v_{i})$, and thus for some large enough $s\ge i$, $[0.W\rest s, 0.W\rest s+2^{-s}]\subseteq (u_{i},v_{i})$.
%One can easily check that $\mu(T)\ge 1-q$. \andre{you mean $\mu^*(T)$ throughout?}

Choose  $q\in \Q$ such that $ \mu(S)\le \bar\mu(U) \le q<1$.
Then, $\mu(T_{S})\ge 1-q$.
Let $\widetilde{T} = \{\tau \in 2^{<\mathbb{N}} : \tau \not \in T_{S} \land \tau \restriction_{(|\tau| - 1)} \in T_{S}\}$.
Put 
\[T^{n}:=\{\sigma_{0}^{\frown}\dots^{\frown}\sigma_{k}: k< n\wedge [ \A i< k\, \sigma_{i}\in \widetilde{T}]\wedge \sigma_{k}\in T_{S}\}.\]
Then, we have $\mu(T^{n})\ge 1-q^{n}$.
Thus, for large enough $l\in\N$, $\langle 2^{\N}\setminus [T^{ln}]: n\in\N \rangle$ forms a Martin-L\"of test relative to $S$, and hence $Z\in [T^{ln}]$ for some $n\in\N$.
By $\Sigma^{0}_{1}$-induction, take
\[m=\max\{m': \E c\le ln\, \E \langle \sigma_{i}\in\widetilde{T}: i<c \rangle(Z|_{m'}=\sigma_{0}^{\frown}\dots^{\frown}\sigma_{c-1})\}.\]
Then, the tail $W$ of $Z$ defined by  $W(i)=Z(i+m)$ is in $[T_{S}]$, whence $0.W\in [0,1]\setminus U$.
%By the definition of $T^{ln}$, some tail of $Z$ is in $[T]$. \andre{is it clear that the tail exists in the model of RCA?}
\end{proof}
%The proof of the previous lemma makes use of Ku$\check{\text{c}}$era's result \cite[Proposition 3.2.24]{nies2009computability} that, for any given $\Pi^0_1$ class $P$ of positive measure and \ML random sequence $Z$, some  tail of $Z$   lies in $P$. 

\begin{theorem}[$\WWKL$] \label{diffpoint}
%Every effectively uniformly continuous function of bounded variation is pseudo-differentiable at some point.
Every     rationally presented function of bounded variation is pseudo-differentiable at some point, and  is actually pseudo-differentiable  almost surely.
\end{theorem}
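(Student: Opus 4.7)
The strategy is to pass to a conservative extension where Jordan decomposition is available, apply the monotone case Lemma~\ref{lem1}, and then descend the result via arithmetic absoluteness. Start with a countable model $(\M,\S)\models\WWKL$ and invoke Lemma~\ref{lemma2} to obtain $\widehat\S\supseteq\S$ with $(\M,\widehat\S)\models\WKL$ enjoying the crucial property that for every $A\in\widehat\S$ some $Z\in\S$ is Martin-L\"of random relative to $A$.

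Inside $(\M,\widehat\S)$, apply Theorem~\ref{jordanq} to the rationally presented BV function $f$ to obtain a rationally presented non-decreasing function $g$ with $f\le^*_\mathsf{slope} g$. Then $h:=g-f$ is non-decreasing on $[0,1]_\Q$, and by Proposition~\ref{prop:rational-presentation-in-RCA}(i) it has a rational presentation in $\widehat\S$; so $f=g-h$ with $g,h\in\widehat\S$ rationally presented non-decreasing. Given any open $U\subseteq[0,1]$ coded by $S\in\S$ with $\bar\mu(U)<1$, set $A=g\oplus h\oplus S\in\widehat\S$ and invoke Lemma~\ref{lemma2}(2) to find $Z\in\S$ that is Martin-L\"of random relative to $A$. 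By Lemma~\ref{lem-as}, there is a tail $W$ of $Z$ with $0.W\in[0,1]\setminus U$; since Martin-L\"of randomness relative to a fixed parameter is preserved under passing to tails, $W$ remains ML-random relative to $g\oplus h$, so Lemma~\ref{lem1} gives that both $g$ and $h$ are pseudo-differentiable at $0.W$. Because slope bounds for $g-h$ on any window around $0.W$ arise by subtracting slope bounds for $g$ and $h$, the function $f=g-h$ is also pseudo-differentiable at $0.W$.

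The assertion ``$f$ is pseudo-differentiable at $0.W$'' is arithmetic in the parameters $f,W\in\S$ and so transfers down to $(\M,\S)$. Thus in $\S$, every open set $U\subseteq[0,1]$ with $\bar\mu(U)<1$ fails to contain some point of pseudo-differentiability of $f$; contrapositively, every open set containing all such points has measure $1$, which is the almost-sure conclusion. Specializing to $U=\emptyset$ (trivially of measure $0<1$) yields the first clause, namely existence of at least one point of pseudo-differentiability. The delicate step is coordinating a single $Z\in\S$ that simultaneously witnesses randomness for Lemma~\ref{lem1} (relative to $g,h$) and for Lemma~\ref{lem-as} (relative to $S$); this is precisely what the strengthened form of Lemma~\ref{lemma2} furnishes, and is the reason we are forced into the two-model argument rather than a direct proof inside $\WWKL$.
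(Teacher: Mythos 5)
Your proposal is correct and follows essentially the same route as the paper's proof: pass to the $\WKL$-extension of Lemma~\ref{lemma2}, apply Theorem~\ref{jordanq} to get $g$ and $h=g-f$, pick $z\in\S$ Martin-L\"of random relative to $g\oplus h\oplus S$, use Lemma~\ref{lem-as} to land outside $U$ and Lemma~\ref{lem1} to get pseudo-differentiability of $g$ and $h$, then transfer the arithmetical conclusion back to $(\M,\S)$. The extra details you supply (the rational presentation of $h$ in $\widehat\S$ and the preservation of randomness under tails) are correct refinements of steps the paper leaves implicit.
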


\begin{proof}
We show that the result holds in any countable model $(\M, \S)$ of $\WWKL$. % and let $f : [0,1] \to \mathbb{R}$ be a continuous function of bounded variation in $(\M, \S)$.
Let $f : [0,1]_{\Q} \to \mathbb{R}$ be a   rationally presented function of bounded variation in $(\M, \S)$,
%and let $\mathcal{U} = \{(u_i, v_i)\}_{i\in\mathbb{N}}$ be a family of open intervals such that $ \sum_{i = 0}^\infty (v_i - u_i) < 1$.
and let $U\subseteq[0,1]$ be an open set such that $\bar\mu(U) < 1$.
We will show that there exists a real $z\in [0,1]\setminus U$ such that $f$ is pseudo-differentiable at $z$.
Let  $(\M, \widehat \S) \models \WKL$ be the model given by  Lemma \ref{lemma2}. By Theorem \ref{jordanq}, $$(\M, \widehat\S) \models \mathsf{Jordan}_\mathbb{Q}.$$ Hence $\widehat \S$ contains a non-decreasing function $g : [0,1]_\mathbb{Q} \to \mathbb{R}$ such that $f \le^*_\mathsf{slope} g$. 

Within $(\M, \widehat \S)$, define $h : [0,1]_\mathbb{Q} \to \mathbb{R}$ by $h(x) = g(x) - f(x)$. By Lemma \ref{lemma2} again, there is a real $z \in [0,1]$ such that $z \in \S$ and $z \in \MLR^{g \oplus h\oplus \mathcal{U}}$.
By Lemma~\ref{lem-as}, we may assume that $z\in [0,1]\setminus U$.
 The functions $g$ and $h$ are pseudo-differentiable at $z$ in $(\M, \widehat \S)$ by Lemma~\ref{lem1}. Therefore $f$ is pseudo-differentiable at $z$ in $(\M, \widehat \S)$, and hence in $(\M, \S)$.
\end{proof}

A continuous function $f:[0,1]\to\R$ is said to be absolutely continuous if for any $\varepsilon>0$ there exists $\delta>0$ such that for any $0\le a_{0}\le \dots \le a_{n}\le 1$ with $a_{n}-a_{0}<\delta$, $\sum_{i<n}|f(a_{i+1})-f(a_{i})|<\varepsilon$.
Note that every absolutely continuous function is of bounded variation within $\RCAo$.
\begin{thm} \label{Thm: Leb}
The following are equivalent over $\RCA$. 
\begin{enumerate}
\item $\WWKLo$
\item Every rationally presented function of bounded variation is pseudo-differentiable   almost surely.
\item Every rationally presented non-decreasing function is pseudo-differentiable   almost surely.
\item Every continuous function of bounded variation is pseudo-differentiable  almost surely.
%\item Every effectively uniformly continuous and absolutely continuous function of bounded variation is pseudo-differentiable at some point.
\item Every effectively uniformly continuous and absolutely continuous function which has a rational presentation is pseudo-differentiable at some point.
\end{enumerate}
\end{thm}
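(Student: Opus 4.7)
The plan is to establish $1 \Rightarrow 2 \Rightarrow 3$, $2 \Rightarrow 4 \Rightarrow 5$, and to close the cycle by producing, from the failure of $\WWKL$, a single function that refutes each of $2,3,4,5$ simultaneously.

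The forward implications are quick. $1 \Rightarrow 2$ is precisely Theorem~\ref{diffpoint}. $2 \Rightarrow 3$ follows because a non-decreasing function is of bounded variation. For $2 \Rightarrow 4$, a continuous BV function $f$ has a rational presentation for a vertical shift $f+r$ by Corollary~\ref{cor:rational-presentation-in-RCA}(ii), and such a shift preserves BV as well as the set of pseudo-differentiability points, so applying 2 to $f+r$ gives the conclusion for $f$. For $4 \Rightarrow 5$, an effectively uniformly continuous absolutely continuous function is continuous and BV, and pseudo-differentiability almost surely implies pseudo-differentiability at some point.

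For the converse, I argue within $\RCA$. Assume $\neg\WWKL$, so there is a sequence $(S_n)_{n \in \N}$ of codes for open sets in $2^\N$ with $\mu(S_n) \le 2^{-n}$ and every real contained in every $[[S_n]]$. Let $U_n = \pi([[S_n]]) \subseteq [0,1]$. I would construct a non-decreasing, absolutely continuous, effectively uniformly continuous function
\[
g(x) \;=\; \sum_{n \in \N} 2^{n/2}\, \bar\mu(U_n \cap [0,x])
\]
and endow its restriction to $[0,1]_\Q$ with a rational presentation via a vertical shift supplied by Lemma~\ref{lem:shift-for-r-presentation}. Each summand $h_n(x) := \bar\mu(U_n \cap [0,x])$ is non-decreasing and $1$-Lipschitz with $h_n(1) \le 2^{-n}$, so the series converges uniformly by comparison with $\sum 2^{-n/2}$, yielding absolute continuity. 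At any $x \in [0,1]$ and any $m$, the covering hypothesis furnishes a small open interval about $x$ contained in $U_m$; on that interval $h_m$ has slope $1$, so $g$ has slope at least $2^{m/2}$. Letting $m \to \infty$ forces $\wtilde D g(x) = \infty$, so $g$ is pseudo-differentiable at no point, simultaneously refuting $2,3,4,5$.

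The chief obstacle is effectivity within $\RCA$: the quantity $h_n(x)$ is in general only a left-c.e.\ real, so $g$ must be specified by an explicit Cauchy name rather than a pointwise definition. I would fix an enumeration of $S_n$, set $h_n^m(x) = \mu(S_n^m \cap [0,x])$ where $S_n^m$ is the set of the first $m$ strings of $S_n$, and form a diagonal $f_s(x) = \sum_{n \le N(s)} 2^{n/2}\, h_n^{m(s)}(x)$ of rational polygonal functions, with $N(s), m(s)$ chosen using the $\Pi^0_1$ bounds $\mu(S_n) \le 2^{-n}$ to ensure $\|f_{s+1} - f_s\|_\infty \le 2^{-s}$. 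The covering property $[0,1] \subseteq \bigcap U_n$ is only $\Pi^0_2$, but is used purely pointwise when verifying nowhere pseudo-differentiability, following Case~1 of the proof of Lemma~\ref{lem1}, so that one never needs to assert the existence of $\wtilde D g(x)$ as a real in the model.
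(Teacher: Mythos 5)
Your forward implications are fine (for $4\Rightarrow 5$ you do need the paper's observation that $\RCAo$ proves $\bar\mu(\emptyset)=0$, so that ``almost surely'' forces at least one point), but the reversal has a fatal gap: the function $g(x)=\sum_n 2^{n/2}\bar\mu(U_n\cap[0,x])$ does not exist in the model. Each $h_n(x)=\bar\mu(U_n\cap[0,x])$ is only the limit of the increasing rational sequence $\mu(S_n^m\cap[0,x])$, and the $\Pi^0_1$ bound $\mu(S_n)\le 2^{-n}$ gives \emph{no modulus of convergence} for that sequence: a string $\sigma\in S_n$ of modest length, hence contributing a ramp of height $2^{-|\sigma|}$ comparable to $2^{-n}$, may appear arbitrarily late in the enumeration. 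So your diagonal $f_s$ cannot be made to satisfy $\|f_{s+1}-f_s\|_\infty\le 2^{-s}$, and the monotone limit need not exist in $\RCAo$ (bounded monotone convergence is $\ACAo$-strength). Concretely, in the $\omega$-model $\mathsf{REC}\models\RCAo+\neg\WWKL$ your $g$ is a non-computable function. Worse, the target itself is unattainable: by Remark~\ref{rem:non-decreasing-differentiability-in-RCA}, $\RCAo$ already proves that every rationally presented \emph{non-decreasing} function is pseudo-differentiable at \emph{some} point, so no single nowhere-pseudo-differentiable monotone function can witness $\neg 3$. The failure of statement~3 must instead be extracted from the pathology of $\bar\mu$ itself: the paper builds an open $U$ with $\bar\mu(U)<1$ whose complement contains only rationals, and pairs it with the jump function $f(p)=\sum_{q_i<p}2^{-i}$, which is pseudo-differentiable only at irrationals and hence not ``almost surely'' so.

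For $\neg 5$ your underlying idea (a cover of $[0,1]$ by open sets of total length $\le 2^{-cn}$ forces unbounded difference quotients everywhere) is exactly the paper's, but the effectivization is the whole point. The paper replaces your ramp of height $2^{-|\sigma|}$ on $I_\sigma$ by a sawtooth $\tup_{I_\sigma}(2^{-2n-|\sigma|},2^{5n})$: the height now decays in $|\sigma|$, so truncating at $|\sigma|\le s$ yields a genuine Cauchy name, while the slope on each tooth is still $2^{3n+1}$ and the total variation of the $n$-th layer is $\le 2^{-n+1}$. The resulting $f=\sum_n f_n$ is effectively uniformly continuous, absolutely continuous, and nowhere pseudo-differentiable --- but it is not monotone, which is consistent with the remark above and is precisely why the paper treats $\neg 3$ and $\neg 5$ by two different constructions rather than one. (Your opening step also silently upgrades $\neg\WWKL$ from ``every real fails some test'' to ``some single sequence of small open sets covers every real''; this is correct but requires passing through a pathless tree of positive measure and iterating, as in Lemma~\ref{lem-as}.)
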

%\andre{define a.c.. It implies u.c. (with the usual epsilon delta definition)}
\begin{proof}
1 $\Rightarrow$ 2 is Theorem~\ref{diffpoint}, 2 $\Rightarrow$ 3 is trivial, 2 $\Rightarrow$ 4 is straightforward from Corollary~\ref{cor:rational-presentation-in-RCA}.
For 4 $\Rightarrow$ 5, within $\RCAo$, we have $\bar\mu(\emptyset)=0$,
since if $[[S]]=\emptyset$, then $S$ is empty, so $\mu(S)=0$.
Thus, if an open set $U\subseteq[0,1]$ has  positive measure, then $U$ is not empty.
It remains to show $\neg$1 $\Rightarrow$ $\neg$3 and $\neg$1 $\Rightarrow$ $\neg$5.
% is trivial \andre{don't see it's trivial...}. 

%\andre{Why? I thought one needs WWKL to conclude a conull set has an element...?}

We  show $\neg$1 $\Rightarrow$ $\neg$3. Assuming $\neg$1 we    first  construct an open set $U\subseteq[0,1]$ such that $\bar{\mu}(U)<1$ and $[0,1]\setminus U$ only contains rationals. The idea is similar to the one in the proof of Proposition~\ref{prop:ML-random-in-RCA}:
If $\mathrm{WWKL}$ fails,  there is a tree $T$ with no paths  such that $\mu(T)\ge \varepsilon$  where $\varepsilon>0$.
Put 

$T'=\{\tau\in 2^{<\N}\mid \tau=\sigma^{\frown}0^{k}$ for some $\sigma\in T$ and $k<|\tau|\}$, 

$\widetilde T=\{\tau\in 2^{<\N}\mid \tau\notin T'$ and $\tau|_{|\tau|-1}\in T'\}$, and 

$U=\bigcup_{\tau\in \widetilde T}(0.\tau,0.\tau+2^{-|\tau|})\subseteq [0,1]$.

\noindent As in the proof of Proposition~\ref{prop:ML-random-in-RCA}, we have $\bar\mu(2^{\N}\setminus [T'])\le 1-\mu(T)<1$, and any path of $T'$ is rational. Thus $\bar\mu(U)\le \bar\mu(2^{\N}\setminus [T'])<1$ and $[0,1]\setminus U$ only contains rationals.

Next we   construct a rationally presented non-decreasing function which is not pseudo-differentiable at any rational.
Let $\{q_{i}\}_{i\in\N}$ be an enumeration of $[0,1]_{\Q}$.
 Define a function $f:[0,1]_{\Q}\to \R$ by $f(p)=\sum_{q_{i}<p}2^{-i}$.
Clearly, $f$ is non-decreasing and not pseudo-differentiable at any rational.
By Proposition~\ref{prop:rational-presentation-in-RCA},  some vertical shift of $f$ has a rational presentation.
Thus we have $\neg$3.

Finally, we   show $\neg$1 $\Rightarrow$ $\neg$5.
This  implication is related  to    \cite[Theorem~6.7]{BMN2016} (originally due to Demuth) in the setting of reverse mathematics.
If $\mathrm{WWKL}$ fails,  there is a tree $T$ with no path such that $\mu(T)\ge \varepsilon$  where $\varepsilon>0$.
  We   construct a sequence of trees $\langle T_{n}: n\in\N \rangle$ such that no $T_{n}$ has a path and $\mu(T_{n})\ge 1-2^{-4n}$.
  Let the $T^{i}$  be defined  as in the proof of Lemma~\ref{lem-as} where $q = 1- \epsilon$. No $T^{i}$ has a path, and $\mu(T^{i})\ge 1-(1-\varepsilon)^{i}$.
Thus, one can effectively choose $i_{0}<i_{1}<\dots$ so that $\mu(T^{i_{n}})\ge 1-2^{-4n}$. Now let  $T_n = T^{i_n}$. 

Let $\widetilde{T}_{n} = \{\tau \in 2^{<\mathbb{N}} : \tau \not \in T_{n} \ \land \  \tau \restriction_{|\tau| - 1} \in T_{n}\}$. As before   put $I_{\sigma}:=[0.\sigma,0.\sigma+2^{-|\sigma|}]$.
 Since $T_{n}$ has no path we have
 $[0,1]=\bigcup_{\sigma\in \widetilde{T}_{n}}I_{\sigma}$ for any $n$. Since $\mu(T_{n})\ge 1-2^{-4n}$ we have  $\sum_{\sigma\in \widetilde{T}_{n}}|I_{\sigma}|\le 2^{-4n}$.
Note that if $\sigma\in \widetilde{T}_{n}$ and $m<n$,   there exists $\tau\in \widetilde{T}_{m}$ such that $\tau\preceq\sigma$.
Note also that   $|\sigma|\ge n$  for any $\sigma\in \widetilde{T}_{n}$.

For $v \in \mathbb R^+$ and $r \in \mathbb N$, recall $\tup_A(v, r)$ denotes  a sawtooth function on the interval $A$ with $r$ many teeth of height $v$. For each $n,s \in \mathbb{N}$ define a polygonal function $f_{n,s} :[0,1] \to \mathbb{R}$ as follows.
For   $\sigma\in \widetilde{T}_{n}$, on the interval $I_{\sigma}$ , set 
\begin{align*}
f_{n,s} = \begin{cases} \tup_{I_{\sigma}}(2^{-2n-|\sigma|}, 2^{5n})  & \textrm{if $|\sigma|\le s$,} \\ 0 & \textrm{otherwise.} \end{cases} 
\end{align*}
%Let $f_s = 0$ elsewhere.
Then $(f_{n,s})_{s\in\mathbb{N}}$ defines an effectively uniformly continuous function $f_{n}$.
For these functions $f_{n}$ one can check the following properties. %%$f_{n}$'s is WRONG
\begin{enumerate}
\renewcommand{\labelenumi}{(\roman{enumi})}
 \item If $\sigma\in \widetilde{T}_{n}$, $x\in I_{\sigma}$ and $m\ge n$, then $0\le f_{m}(x)\le 2^{-2m-|\sigma|}$. In particular, $|f_{n}|\le 2^{-2n}$.\label{enum-nondif-1}
 \item For any $0\le x<y\le 1$ and for any $n\in\N$, $|f_{n}(x)-f_{n}(y)|/|x-y|\le 2^{3n+1}$.\label{enum-nondif-2}
 \item $\v_{f_{n}}\le 2^{-n+1}$.\label{enum-nondif-3}
% \item for any $x\in [0,1]$ and $\varepsilon>0$, there exists $y\in [0,1]\setminus\{x\}$ such that $|x-y|<\varepsilon$ and $|f_{n}(x)-f_{n}(y)|/|x-y|=2^{n}$. 
\end{enumerate}
(i) and (ii) follow from the definition.
To see (iii), 
\[\v_{f_{n}}=\sum_{\sigma\in \widetilde{T}_{n}}\v_{\tup_{I_{\sigma}}(2^{-2n-|\sigma|}, 2^{5n})}=\sum_{\sigma\in \widetilde{T}_{n}}2^{3n-|\sigma|+1}\le 2^{3n+1}2^{-4n}=2^{-n+1}.\]

Define an effectively uniformly continuous function $f$ by $f=\sum_{n\in\N}f_{n}$.
Then, $f$ is of bounded variation since $\v_{f}=\sum_{n\in\N} \v_{f_{n}}\le 2$.
Actually, $f$ is absolutely continuous. One can see this as follows. For any $x\in[0,1]$ and $\varepsilon>0$, take large enough $n\in\N$ so that $\sum_{j>n}\v_{f_{j}}<\varepsilon/2$. Since each   $f_{i}$, $i\le n$,  is absolutely continuous by (ii), one can find $\delta>0$ so that $\sum_{i\le n}(f_{i}(x)-f_{i}(y))<\varepsilon/2$ for any $y$ such that $|x-y|<\delta$.

By Corollary~\ref{cor:rational-presentation-in-RCA}(ii), after  replacing $f$ with a vertical shift we may  assume that $f$ has a rational presentation.

We will see that this $f$ is not pseudo-differentiable at any point.
Let $x\in[0,1]$, $\delta>0$ and $K\in\N$.
We will find $a\le x\le b$ so that $b-a<\delta$ and  $|S_{f}(a,b)|>K$.
Take  $n\in\N$ large enough so that $2^{3n-1}>K$ and $|I_{\sigma}|<\delta$ for any $\sigma\in \widetilde{T}_{n}$.
Since $T_{n}$ has no path, there exists $\sigma\in \widetilde{T}_{n}$ such that $x\in I_{\sigma}$.
Let $a\le x\le b$ so that  $a,b$ are nearest to $x$ yielding extreme values   of the saw-tooth function $\tup_{I_{\sigma}}(2^{-2n-|\sigma|}, 2^{5n})$.
Then, $|f_{n}(b)-f_{n}(a)|=2^{-2n-|\sigma|}$ and $b-a=2^{-5n-1-|\sigma|}$.
By (i), $|f_{j}(b)-f_{j}(a)|\le 2^{-2j-|\sigma|}$ for any $j>n$, and by (ii), $|f_{i}(b)-f_{i}(a)|/|b-a|\le 2^{3i+1}$ for any $i<n$.
Thus,
\begin{align*}
 \frac{|f(b)-f(a)|}{|b-a|}&\ge \frac{|f_{n}(b)-f_{n}(a)|}{|b-a|}-\sum_{j>n}\frac{|f_{j}(b)-f_{j}(a)|}{|b-a|}-\sum_{i<n}\frac{|f_{i}(b)-f_{i}(a)|}{|b-a|}\\
 &\ge\frac{2^{-2n}-|\sigma|}{2^{-5n-1-|\sigma|}}-\sum_{j>n}\frac{2^{-2j-|\sigma|}}{2^{-5n-1-|\sigma|}}-\sum_{i<n}2^{3i+1}\\
 &\ge 2^{3n+1}-2^{3n}-2^{3n-1} = 2^{3n-1}.
\end{align*}
Hence, $|S_{f}(a,b)|>K$. \end{proof}

\begin{remark}
The equivalence of conditions 1 and 3 in the foregoing theorem seems rather strange compared to \cite[Theorem~4.3]{BMN2016} and our discussion in Remark~\ref{rem:non-decreasing-differentiability-in-RCA} since there is no appearance of Martin-L\"of random reals.
Note that the existence of computable random reals doesn't imply $\mathrm{WWKL}$ over $\RCAo$.
This tricky situation may be understood that it is caused by a bad behavior of the Lebesgue measure within $\RCAo$.
For example, one cannot say that \textit{every open set $U\subseteq [0,1]$ is of measure $1$ if $[0,1]\setminus U$ is countable}.
In fact, this is equivalent to $\mathrm{WWKL}$ by the argument in the previous proof.
\end{remark}

\end{document}